\algrenewcommand\algorithmicindent{3.0em}%
\newtheorem{theorem}{Theorem}[]
\newtheorem{lemma}[theorem]{Lemma}
\newtheorem{definition}[theorem]{Definition}
\newtheorem{observation}[theorem]{Observation}
\newtheorem{remark}[theorem]{Remark}
\date{}
\def\@fnsymbol#1{\ensuremath{\ifcase#1\or \dagger\or \ddagger\or
   *\or \mathparagraph\or \|\or \mathsection\or \dagger\dagger
   \or \ddagger\ddagger \else\@ctrerr\fi}}
\begin{document}

\title{Partitioning into degenerate graphs in linear time}
\author{
Timothée Corsini\thanks{Univ. Bordeaux, LaBRI, CNRS, Bordeaux INP, Talence, France.} \ \
Quentin Deschamps\thanks{Univ. Lyon, Université Lyon 1, LIRIS UMR CNRS 5205, F-69621, Lyon, France.} \ \
Carl Feghali\thanks{Univ. Lyon, EnsL, UCBL, CNRS, LIP, F-69342, Lyon Cedex 07, France.}\\
Daniel Gon\c{c}alves\thanks{LIRMM, Univ Montpellier, CNRS, Montpellier, France.} \ \
Hélène Langlois\thanks{CERMICS, École des Ponts ParisTech, 77455 Marne-la-Vallée, France and LIGM, Univ. Gustave Eiffel, 77454 Marne-la-Vallée, France.} \ \
Alexandre Talon\thanks{Univ. Grenoble Alpes, CNRS, Grenoble INP, G-SCOP, 38000 Grenoble, France and CNRS - Sorbonne Université.}
}

\maketitle

\begin{abstract}
Let $G$ be a connected graph with maximum degree $\Delta \geq 3$ distinct 
from $K_{\Delta + 1}$. Generalizing Brooks' Theorem, Borodin and independently Bollob\'as and Manvel, proved 
that if $p_1, \cdots, p_s$ are non-negative integers such that $p_1 + \cdots + p_s \geq \Delta - s$, then $G$ admits a vertex partition into parts $A_1, \cdots, A_s$ such that, for $1 \leq i \leq s$,  $G[A_i]$ is $p_i$-degenerate. Here we show 
that such a partition can be performed in time $O(n+m)$. This generalizes previous results that
treated subcases of a conjecture of Abu-Khzam, Feghali and Heggernes~\cite{abu2020partitioning} which our result settles in full. 
\end{abstract}

\section{Introduction}

Brooks' Theorem is a fundamental theorem in graph coloring that draws a connection between the chromatic number and the maximum degree of a graph. 

\begin{theorem}[Brooks' Theorem \cite{brooks1941colouring}]
\emph{Every connected graph with maximum degree $\Delta \geq 3$ that is distinct from $K_{\Delta + 1}$ is $\Delta$-colorable.}
\end{theorem}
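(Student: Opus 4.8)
The plan is to prove the statement by producing, for each admissible $G$, a linear order of its vertices along which the greedy coloring algorithm never uses more than $\Delta$ colors. The whole argument rests on one observation about greedy coloring: if the vertices are ordered $v_1, \dots, v_n$ so that every $v_i$ with $i < n$ has at most $\Delta - 1$ neighbors among $v_1, \dots, v_{i-1}$, and in addition the last vertex $v_n$ has two neighbors receiving a common color, then greedily assigning to each vertex the smallest color avoided by its already-colored neighbors produces a proper $\Delta$-coloring. I would construct such an order by induction, splitting into cases according to whether $G$ is regular and, if so, how well connected it is.

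First I would treat the case where $G$ is not $\Delta$-regular. Choosing a vertex $r$ with $\deg(r) \le \Delta - 1$ and listing the vertices by non-increasing distance from $r$ (so that $r = v_n$), each vertex other than $r$ has a neighbor strictly closer to $r$, hence a later-indexed neighbor, and therefore at most $\Delta - 1$ earlier neighbors; greedy coloring then succeeds, and $r$ itself is colorable since its degree is at most $\Delta - 1$. Next I would reduce the regular case to the $2$-connected regular case. If $G$ is $\Delta$-regular with a cut vertex $x$, then $x$ distributes its $\Delta$ edges among at least two blocks, so in every block $B$ through $x$ we have $\deg_B(x) < \Delta$; in particular no block equals $K_{\Delta+1}$, each block is covered by the non-regular case above using $x$ as the low-degree vertex, and permuting colors blockwise so that $x$ receives a single color glues the partial colorings into a $\Delta$-coloring of $G$.

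The remaining case, where $G$ is $2$-connected, $\Delta$-regular and different from $K_{\Delta+1}$, is the heart of the matter, and the main obstacle is the structural lemma I would prove here: $G$ has a vertex $v$ with two non-adjacent neighbors $a$ and $b$ such that $G - \{a, b\}$ is connected. Given this, I would order the vertices as $v_1 = a$, $v_2 = b$, then the remaining vertices by non-increasing distance from $v$ inside the connected graph $G - \{a, b\}$, ending with $v_n = v$; since $a, b$ are non-adjacent they may share color $1$, each intermediate vertex has a later neighbor on a shortest path to $v$ and hence at most $\Delta - 1$ earlier neighbors, and when $v$ is colored last its $\Delta$ neighbors display at most $\Delta - 1$ colors.

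To prove the lemma I would distinguish on connectivity. If $G$ is $3$-connected, then as $G \ne K_{\Delta+1}$ there are two vertices $a, b$ at distance exactly $2$ with a common neighbor $v$, and $G - \{a, b\}$ stays connected by $3$-connectivity. If instead $G$ has a $2$-cut $\{v, w\}$, I would pick $a$ and $b$ to be neighbors of $v$ lying in the interiors of two distinct endblocks of $G - v$, and check that deleting these non-cut vertices, then reinserting $v$ (which still keeps a neighbor because $\Delta \ge 3$), leaves a connected graph. The delicate point, and where $2$-connectivity and regularity are really used, is verifying that such neighbors $a, b$ exist — $v$ must reach the interior of each endblock, else its attaching cut vertex would separate $G$ — and that their removal does not disconnect $G$; everything else is routine bookkeeping.
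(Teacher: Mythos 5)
Your proof is correct in substance, but it takes a genuinely different route from the paper: the paper does not prove Brooks' Theorem directly at all. It cites it, and observes that it follows as a one-line corollary of Theorem~\ref{borodin} (take $s=\Delta$ and all $p_i=0$, so each part is $0$-degenerate, i.e.\ an independent set, and a partition into $\Delta$ independent sets is a $\Delta$-coloring). What you have written is essentially Lov\'asz's short proof \cite{lovasz1975three}, which the paper mentions precisely because it yields the coloring in linear time: order the vertices so that all but the last have a later neighbor, and arrange for the last (regular, $2$-connected) vertex to see a repeated color via two non-adjacent neighbors $a,b$ whose deletion keeps the graph connected. It is worth noting that your key lemma is exactly the $s=2$, $p_A=0$ shadow of the paper's ``special neighborhood'' $(z,X)$ of Definition~\ref{def-z-X}: you need two non-adjacent neighbors of $z$ with $G$ minus them connected, whereas the paper needs $\Delta-1$ of them containing a non-edge with $G\setminus X$ connected; Algorithm~\ref{algo-z-X} is the linear-time, strengthened version of your case analysis on connectivity. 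Two small points to tighten in your write-up: in the cut-vertex case, ``each block through $x$'' does not literally cover blocks not containing $x$, so you should either induct on the number of blocks or color $G[C\cup\{x\}]$ for each component $C$ of $G-x$ (every such piece has $x$ as a vertex of degree $<\Delta$); and in the $2$-cut case you should say explicitly why $a$ and $b$ are non-adjacent (each lies in exactly one block of $G-v$, and these blocks are distinct, so no edge can join them) and why deleting two interior vertices of two distinct end-blocks of $G-v$ leaves $G-v$ connected. Both are routine and do not affect the validity of the argument.
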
  

\medskip

A graph $G$ is \emph{$d$-degenerate} if every non-empty subgraph of $G$ contains a vertex of degree at most $d$. Borodin~\cite{borodin1976decomposition} and, independently, Bollob\'as and Manvel~\cite{bollobas1979optimal} obtained the following generalization.

\begin{theorem}[Borodin~\cite{borodin1976decomposition}, Bollob\'as and Manvel~\cite{bollobas1979optimal}]\label{borodin}
Let $G$ be a non-complete connected graph with maximum degree $\Delta \geq 3$. Let $s \geq 2$ and $p_1, \cdots, p_s \geq 0$ be integers such that $\sum_{i=1}^s{ p_i} \geq \Delta - s$. Then $V(G)$ can be partitioned into sets $V_1, \cdots, V_s$ such that,  for each $i \in [1,\cdots,s]$, $G[V_i]$ is  (i) $p_i$-degenerate, and (ii) has maximum degree at most $p_i +1$. 
\end{theorem}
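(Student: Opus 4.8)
The plan is to realise the partition as the minimiser of a weighted edge–potential, read off the maximum–degree bound from single–vertex exchanges, and reduce the degeneracy requirement to ruling out one tight, Brooks–type configuration. Write $P_i = p_i+1$, and observe that the hypothesis $\sum_i p_i \ge \Delta - s$ is precisely $\sum_{i=1}^s P_i \ge \Delta$. For a partition $\mathcal{P}=(V_1,\dots,V_s)$ and $v\in V_i$, let $d_j(v)$ be the number of neighbours of $v$ in $V_j$. I would fix a partition of $V(G)$ into $s$ parts minimising
\[ \Phi(\mathcal{P}) \;=\; \sum_{i=1}^{s} \frac{e(G[V_i])}{P_i}. \]
Moving one vertex $v$ from $V_i$ to $V_j$ changes $\Phi$ by $d_j(v)/P_j - d_i(v)/P_i$, so minimality gives $d_j(v)/P_j \ge d_i(v)/P_i$ for every $j$ and every $v\in V_i$. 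Summing over all $j$ and using $\sum_j P_j \ge \Delta \ge \deg_G(v)$ yields
\[ \deg_G(v) \;=\; \sum_j d_j(v) \;\ge\; \frac{d_i(v)}{P_i}\sum_j P_j \;\ge\; \frac{d_i(v)}{P_i}\,\Delta, \]
hence $d_i(v) \le P_i\,\deg_G(v)/\Delta \le P_i = p_i+1$. This is exactly condition (ii).

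For condition (i) I would first record the elementary equivalence: a graph of maximum degree at most $p_i+1$ is $p_i$-degenerate if and only if none of its connected components is $(p_i+1)$-regular. Indeed, a subgraph of minimum degree $\ge p_i+1$ is, under the degree cap, $(p_i+1)$-regular, and then each of its vertices has all its neighbours inside it, so it is a union of $(p_i+1)$-regular components. Next I would notice that the displayed chain is strict unless $\deg_G(v)=\Delta$ \emph{and} $\sum_j P_j=\Delta$: whenever $\sum_i p_i > \Delta - s$, or at any vertex of degree below $\Delta$, one gets $d_i(v)\le p_i$, which already forces $p_i$-degeneracy of $G[V_i]$. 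Thus the sole obstruction lives in the tight case $\sum_i p_i = \Delta - s$, where a part $V_i$ might contain a $(p_i+1)$-regular component $C$; tracing the equality cases, every $v\in C$ is \emph{saturated}, meaning $\deg_G(v)=\Delta$ and $d_j(v)=P_j$ for all $j$.

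Ruling out such a saturated regular component is the heart of the matter, and the step I expect to be the main obstacle; it is exactly here that the hypotheses $G$ connected, $\Delta\ge 3$, $G\ne K_{\Delta+1}$ must be used. The useful structural fact is that moving a saturated $v\in C$ to another part leaves $\Phi$ unchanged — both terms of the local difference equal $1$ — so one never leaves the set of minimisers and may perform exchanges freely. I would impose a secondary extremal criterion, for instance minimising the number of vertices lying in a $(p_i+1)$-regular component of their own part, and then run a rotation/augmentation argument seeded at $C$: pulling a vertex out of $C$ drops the degrees of its $V_i$-neighbours below $p_i+1$ and so dismantles $C$, and the only way the bad structure can survive is by reappearing in another part and propagating along saturated vertices.

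The delicate point, precisely as in Lovász's proof of Brooks' Theorem, is to show that this propagation must terminate unless the saturated vertices close up into a clique, forcing a component equal to $K_{\Delta+1}$ and contradicting $G\ne K_{\Delta+1}$; the low-degree degeneracies that would otherwise survive (such as cyclic components when $p_i+1=2$) are excluded by $\Delta\ge 3$ together with connectivity. Making this termination and the clique-detection rigorous is the crux, while the potential computation above cleanly reduces the whole statement to this single Brooks-style case.
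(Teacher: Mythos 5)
Your potential-function setup is correct as far as it goes: minimizing $\Phi(\mathcal{P})=\sum_i e(G[V_i])/(p_i+1)$ over all partitions and analyzing single-vertex moves does yield condition (ii), and your reduction of condition (i) to the exclusion of a $(p_i+1)$-regular component consisting entirely of saturated vertices is also correct. For what it is worth, this is a different route from anything in the paper: the paper does not prove this statement at all (it imports it from Borodin and from Bollob\'as--Manvel), and its own contribution is a linear-time algorithm that establishes only condition (i), via degenerate vertex orderings rooted at a low-degree vertex and a ``special neighborhood'' construction in the tight regular case. Your argument is the classical non-constructive one.

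The genuine gap is that the last step --- ruling out a saturated $(p_i+1)$-regular component $C$ in the tight case $\sum_i p_i = \Delta - s$ --- is only sketched, and that step is the entire content of the theorem beyond the easy computation. Everything you have written holds verbatim for $G=K_{\Delta+1}$, where every minimizing partition does contain such a component and the conclusion is false; so the hypotheses ($G$ connected, $\Delta\ge 3$, $G\ne K_{\Delta+1}$) must do their work exactly in the step you defer, and as written they are never used. Moreover, your proposed descent is not obviously well-founded: after moving a saturated $v$ out of $C$ into $V_j$, the vertex $v$ has exactly $p_j+1$ neighbours in $V_j$ and may close up a new saturated $(p_j+1)$-regular component $C'$ there, which can contain more vertices than $C$ did, so the secondary quantity ``number of vertices lying in a regular component of their own part'' need not decrease. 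Turning this into a proof requires the actual combinatorial core of the Borodin / Bollob\'as--Manvel arguments (or of Lov\'asz-style proofs of Brooks' theorem): one must show that if every admissible exchange recreates a bad component, then the saturated vertices force a $K_{\Delta+1}$ component (with separate treatment of the degenerate structures arising when some $p_i+1\le 2$, e.g.\ cycle components). You have correctly located the crux, but you have not supplied it, so the proposal does not yet prove the statement.
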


Brooks' Theorem follows from Theorem~\ref{borodin} by noting that a $d$-degenerate graph is $(d+1)$-colorable. We should also mention that similar generalizations and variants of Brooks' Theorem exist: see~\cite{schweser2018vertex, schweser2021partitions} for generalizations on hypergraphs, see~\cite{aboulker2021four,gol2016,bang2020digraphs_variable} for generalizations on digraphs, and see~\cite{panconesi1992improved} for a distributed version.

From an algorithmic perspective, a very short proof of Brooks' Theorem due to Lov\'asz~\cite{lovasz1975three} produces the coloring in linear time. 
The original proof of Theorem~\ref{borodin} and the alternative proof provided by 
Matamala~\cite{matamala2007vertex} are not algorithmic.
Though, another proof of Theorem~\ref{borodin} in~\cite{borodin2000variable} is algorithmic with polynomial complexity (the runtime appears to be cubic in the number of vertices).
This raises the question of whether one can possibly improve its time complexity to linear.  In view of this, several groups improved the complexity of such a partition algorithm, focusing on property (i) only.
Bonamy et al.~\cite{bonamy2017recognizing} showed that the complexity in the special case $s = 2$ with $p_1 = 0$ and $p_2 = \Delta - 2$ can be improved to quadratic for $\Delta \geq 4$ and to linear for $\Delta = 3$. Similarly, Abu-Khzam, Feghali and Heggernes~\cite{abu2020partitioning} showed that in the special case $ p_i \leq 1$ for all $i \in [s]$, it can be improved to linear. 

The object of this paper is to obtain a common generalization of these results in linear time.

\begin{theorem}\label{thm:main-bis}
There exists an algorithm that, given a non-complete connected graph $G$ with $n$ vertices, $m$ edges, and maximum degree $\Delta \geq 3$, and given a sequence  $(p_1, \cdots, p_s)$ of non-negative integers such that $s\ge 2$ and $\sum_{i=1}^s p_i \geq \Delta - s$, provides in time $O(n+m)$ a partition of $V(G)$ into sets $V_1, \cdots, V_s$ 
such that for each $i \in [s]$ $G[V_i]$ is $p_i$-degenerate.
\end{theorem}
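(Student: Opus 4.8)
The plan is to avoid local search entirely and build the partition in a single sweep, in the spirit of Lovász's linear-time proof of Brooks' Theorem. Rewriting the hypothesis as $\sum_{i=1}^s (p_i+1) \ge \Delta$, I think of each part $V_i$ as having ``capacity'' $p_i+1$. The key observation is that an ordering $x_1, \dots, x_n$ of $V(G)$ certifies that $G[V_i]$ is $p_i$-degenerate provided every vertex of $V_i$ has at most $p_i$ neighbours among the vertices that follow it in the ordering and lie in $V_i$; one may then peel $V_i$ from the front. So I would process the vertices in a fixed order $\pi(1), \dots, \pi(n)$ and greedily assign $\pi(k)$ to any part $i$ in which it currently has at most $p_i$ already-placed neighbours; the reverse of $\pi$ is then the desired simultaneous degeneracy ordering.

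When is the greedy step guaranteed to succeed at a vertex $v$? If $v$ has at least one neighbour not yet placed, then at most $d(v)-1 \le \Delta-1 < \sum_i (p_i+1)$ of its neighbours are placed, so by pigeonhole some part $i$ holds at most $p_i$ of them. Hence the only vertices that can block the greedy step are those all of whose neighbours precede them. I would therefore take $\pi$ to be a reverse-BFS order from a root $r$ in each connected component, so that every non-root vertex is processed before its tree-parent and thus has a later neighbour; only the component roots can be problematic. For the implementation to run in $O(n+m)$, at each vertex I examine only the parts actually touched by its placed neighbours (at most $d(v)$ of them), keeping a count per touched part; if fewer than $s$ parts are touched an untouched part is always available since $p_i \ge 0$. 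This gives $O(\sum_v d(v)) = O(n+m)$ in total.

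It remains to place a root $r$, whose neighbours are all placed. Writing $n_i$ for the number of neighbours of $r$ in part $i$, the greedy step fails only if $n_i \ge p_i+1$ for every $i$; summing forces $\Delta \ge d(r) = \sum_i n_i \ge \sum_i (p_i+1) \ge \Delta$, so equality holds throughout. Thus a block can occur only when $d(r)=\Delta$, the capacity bound is tight ($\sum_i (p_i+1)=\Delta$), and every part receives exactly $p_i+1$ of $r$'s neighbours. In particular, if $G$ is not $\Delta$-regular I would simply take each root to be a vertex of degree $<\Delta$, and no block ever occurs.

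The genuinely hard case, mirroring the core of Brooks' Theorem, is when $G$ is connected, non-complete, and $\Delta$-regular with $\sum_i(p_i+1)=\Delta$. Here I would invoke the standard structural fact that such a graph contains a vertex $r$ with two non-adjacent neighbours $u,w$ for which $G-\{u,w\}$ is still connected; these can be found in linear time. I then place $u$ and $w$ first and into a common part $\alpha$, spanning-tree the connected graph $G-\{u,w\}$ from $r$, process the remainder in reverse-BFS order, and finish with $r$. Because $u \not\sim w$, part $\alpha$ should be able to absorb $r$: if the greedy step blocks at $r$, then $\alpha$ holds exactly $p_\alpha+1$ neighbours of $r$, two of which are the non-adjacent $u,w$; removing $u$ from $\alpha$ (this keeps $\alpha$ degenerate) leaves $r$ with only $p_\alpha$ neighbours there, so $r$ may be peeled first, and $u$ is re-placed elsewhere. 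I expect the main obstacle to be exactly this last step: showing that the displaced vertex can be re-placed --- in the worst case via a short Kempe-type augmentation rooted at the non-adjacency of $u$ and $w$ --- and that the whole repair touches each vertex and edge only a bounded number of times, so that the algorithm stays within $O(n+m)$. To keep these insertions clean I would maintain throughout the stronger invariant of Theorem~\ref{borodin}, namely that each $G[V_i]$ also has maximum degree at most $p_i+1$, which both guarantees a peelable vertex at every stage and controls the structure of any would-be dense obstruction.
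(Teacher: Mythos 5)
Your first half---the greedy sweep over a reverse-BFS order rooted at a low-degree vertex, the pigeonhole argument from $\sum_i(p_i+1)\ge\Delta$, and the amortized charging of part-lookups to edges---is essentially the paper's Section~2 and is correct. The gap is in the $\Delta$-regular tight case. Pre-placing just \emph{two} non-adjacent neighbours $u,w$ of $r$ into a common part $\alpha$ only saves $r$ when $p_\alpha=0$: if $r$ is blocked then $n_i=p_i+1$ for every $i$, and $n_\alpha=p_\alpha+1\ge 2$ is perfectly consistent with $u,w\in\alpha$ once $p_\alpha\ge 1$, so unlike in Brooks' Theorem (where every part has capacity $1$) the two like-placed neighbours buy you nothing. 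Your fallback---evict $u$ from $\alpha$ and re-place it, possibly via a Kempe-type augmentation---is exactly the step you leave open, and it is the crux: re-placing $u$ can overload the counts of other vertices and cascade, reordering $r$ before $u$ instead pushes $r$'s other $\alpha$-neighbours over budget, and you give no argument that the repair terminates, let alone within total linear time.

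What is actually needed, and what the paper does, is to pre-place $p_\alpha+2$ of $r$'s neighbours into $\alpha$ (not two); then some other part $j$ automatically receives at most $p_j$ of $r$'s neighbours and $r$ is never blocked, with no repair step at all. This creates two obligations your sketch does not meet: (i) the $p_\alpha+2$ pre-placed vertices must themselves induce a $p_\alpha$-degenerate graph, which is why two of them must be non-adjacent and ordered last among them; and (ii) after deleting the $\Delta-1$ pre-placed neighbours, the remaining graph must stay connected, so that the spanning-tree post-order still guarantees every other vertex a later neighbour. Guaranteeing (ii) in linear time is the real difficulty: the paper's notion of a \emph{special neighbourhood} $(z,X)$, the search inside an end-block of the block decomposition, and the exceptional quasi-clique $K_*^-$ (for which no special neighbourhood exists and a separate splicing argument is used) are devoted entirely to this. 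Your proposal contains neither the correct counting at the root nor any mechanism for preserving connectivity after the pre-placement, so the hard case is not established.
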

Theorem \ref{thm:main-bis} settles a conjecture of Abu-Khzam, Feghali and Heggernes \cite{abu2020partitioning} and, in the special case $s = 2$, a problem of Bonamy et al. \cite{bonamy2017recognizing}. 

\begin{remark}
\label{remark-connected}
If the graph is not connected, we can solve the problem with the same complexity by running an algorithm for the connected case on each connected component and merging the partitions we obtain.
\end{remark}
In what follows, we always consider connected graphs.

\begin{remark}
\label{remark-greedy}
Note that the complexity of the algorithm does not depend on the length $s$ of the sequence $(p_1, \cdots, p_s)$ as in fact only the first $\Delta$ elements of this sequence will be considered by our algorithm. Indeed, those are always sufficient to fulfill the condition $\sum_{i=1}^s p_i \geq \Delta - s$.
\end{remark}

The paper is organized as follows. In Section~\ref{sec:not-regular},  we prove Theorem~\ref{thm:main-bis} in the case
when the constraint is loose (i.e. $\sum_{i=1}^s p_i > \Delta - s$) or when the graph is not $\Delta$-regular.
Then, the (more difficult) regular case with $s = 2$ is treated in Section~\ref{sec:regular}. Afterwards, in Section~\ref{section-proof-general-case}, we deduce Theorem~\ref{thm:main-bis} in full. In the final section, Section~\ref{sec:remarks}, we conclude with some remarks.

\section{The case of non-regular graphs}\label{sec:not-regular}
\label{section-non-regular}
In this section, we describe the algorithm for the non-regular case of Theorem~\ref{thm:main-bis}. The proof relies on the following folklore
observation which enables us to get a certificate of $d$-degeneracy for a graph. For completeness, we give the details. 

Given a graph $G$ and a vertex ordering $v_1,v_2,\cdots,v_n$ of $G$ we denote by $N^<(v_i)$ the neighbors of $v_i$ with lower indices, that is $N^<(v_i) = N(v_i) \cap \{v_j\ |\ j< i\}$.

\begin{observation}\label{obs-ddeg}
A graph $G$ is $d$-degenerate if and only if it admits a vertex
ordering $v_1,v_2,\cdots,v_n$ such that $|N^<(v_i)| \le d$ for every
vertex $v_i$.
\end{observation}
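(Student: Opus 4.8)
The plan is to prove the two implications of the equivalence separately, both by elementary arguments built directly from the definition of $d$-degeneracy.

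For the forward direction, suppose $G$ is $d$-degenerate, and I would construct the ordering backwards, from $v_n$ down to $v_1$, by repeatedly peeling off a vertex of small degree. Since $G$ itself is a non-empty subgraph, it contains a vertex of degree at most $d$; I choose this vertex to be $v_n$ and delete it. The key point is that any subgraph of $G - v_n$ is also a subgraph of $G$, so $G - v_n$ is again $d$-degenerate and again contains a vertex of degree at most $d$, which I name $v_{n-1}$. Iterating, at the step producing $v_i$ the current graph is exactly $G[\{v_1,\dots,v_i\}]$, and $v_i$ has degree at most $d$ there; since its neighbors in this induced subgraph are precisely its lower-indexed neighbors, this gives $|N^<(v_i)| \le d$, as required.

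For the reverse direction, suppose such an ordering $v_1,\dots,v_n$ exists, and let $H$ be an arbitrary non-empty subgraph of $G$. I would let $v_i$ be the vertex of $H$ of largest index. Every neighbor of $v_i$ inside $H$ has a smaller index than $i$, hence belongs to $N^<(v_i)$, so that $\deg_H(v_i) \le |N^<(v_i)| \le d$. Thus $H$ contains a vertex of degree at most $d$, and since $H$ was arbitrary this shows $G$ is $d$-degenerate.

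There is no serious obstacle here: the statement is a folklore characterization, and both directions reduce to one clean observation each — that deleting vertices preserves degeneracy in the forward direction, and that the maximum-index vertex of any subgraph has all its neighbors among its lower-indexed neighbors in the reverse direction. The only point deserving care is verifying that the iterative peeling indeed terminates with a valid total order and that the degree bound read off at each deletion step coincides with the back-degree $|N^<(v_i)|$ in the final ordering.
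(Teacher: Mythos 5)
Your proof is correct and is exactly the standard argument: peel off minimum-degree vertices to build the ordering from the back, and for the converse take the highest-indexed vertex of an arbitrary non-empty subgraph. The paper states this as a folklore observation and does not actually include a proof, so there is nothing to compare against; your write-up supplies precisely the expected details, including the one point worth checking (that the degree of $v_i$ in $G[\{v_1,\dots,v_i\}]$ coincides with $|N^<(v_i)|$).
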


We call such an ordering a \emph{$d$-degenerate ordering}. 
We now describe a greedy procedure, Algorithm~\ref{algo-1}, that can handle several cases.

\begin{algorithm}
%\SetAlgoLined
\textbf{Input}: A graph $G$, an ordered list of its vertices $v_1, \cdots, v_n$, and some integers $p_1, \cdots, p_s$\\\textbf{Output}: A partition of $V(G)$ into sets $A_1, \cdots, A_s$ so that each $G[A_i]$ is $p_i$-degenerate, or an error.

\begin{algorithmic}[1]
\State $A_1, \cdots, A_s \gets \emptyset$

\For{$v_i$ from $v_1$ to $v_n$}
  \State $k \gets 1$ \label{line-instr1}

  \While{$k\leq s$ and $|N(v_i) \cap A_k| > p_k$} \label{line-while-non-regular}
  
      \State $k++$
  \EndWhile \label{line-end-while-non-regular}
  \If{$k == s+1$}
  	\State \Return ERROR \label{line-error}
  \EndIf
  \State $A_k \gets A_k \cup v_i$ \label{line-instr2}

\EndFor   

\State \Return $(A_1, \cdots, A_s)$

\end{algorithmic}

\caption{\texttt{greedy\_partitioning}}~\label{algo-1}
\end{algorithm}

\medskip

\begin{lemma}
Let $G$ be a (not necessarily connected) graph with $n$ vertices, $m$ edges and maximum degree $\Delta \geq 3$. Let $s \geq 1$ and $p_1, \cdots, p_s \geq 0$ be integers and $P = s+\sum_{1\le i\le s} p_i$.
Given an ordering $v_1, \dots, v_n$ of $V(G)$, if $N^<(v_i) < P$ for every $1\le i\le n$, then Algorithm~\ref{algo-1}
returns, in time $O(n+m+s)$, a partition of $V(G)$ into sets $A_1, \cdots, A_s$ such that $G[A_i]$ is $p_i$-degenerate for $1 \leq i \leq s$. 
\label{lem:greedy}
\end{lemma}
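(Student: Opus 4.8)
The plan is to verify that Algorithm~\ref{algo-1} never reaches the error line, that the output is a valid partition with the required degeneracy, and finally (the delicate part) that it admits an implementation running in $O(n+m+s)$ time. The central structural fact throughout is that when $v_i$ is processed, the vertices already distributed among $A_1,\ldots,A_s$ are exactly $v_1,\ldots,v_{i-1}$, so the already-placed neighbours of $v_i$ are precisely $N^<(v_i)$, and these are split among the parts: $\sum_{k=1}^s |N(v_i)\cap A_k| = |N^<(v_i)|$. From this I would rule out line~\ref{line-error}: if the while loop reached $k=s+1$, then $|N(v_i)\cap A_k|>p_k$, i.e.\ $|N(v_i)\cap A_k|\ge p_k+1$, would hold for every $k\in[s]$, forcing $|N^<(v_i)| \ge \sum_{k=1}^s(p_k+1)=P$ and contradicting the hypothesis $|N^<(v_i)|<P$. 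Hence the loop always stops at some $k\le s$, the vertex $v_i$ is placed, and since each vertex is placed exactly once, $(A_1,\ldots,A_s)$ is a partition of $V(G)$.

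For the degeneracy of each part I would invoke Observation~\ref{obs-ddeg}. Fix $A_k$ and order its vertices by their index in $v_1,\ldots,v_n$. When $v_i$ is added to $A_k$, the stopping condition of the while loop guarantees $|N(v_i)\cap A_k|\le p_k$, and at that instant $A_k$ consists exactly of the vertices of $A_k$ preceding $v_i$ in this induced order. Thus the back-neighbourhood of $v_i$ inside $G[A_k]$ has size at most $p_k$ for every $v_i\in A_k$, so Observation~\ref{obs-ddeg} yields that $G[A_k]$ is $p_k$-degenerate, as required.

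The main obstacle is the running time, since a naive reading of the nested loops suggests a cost of order $n\cdot s$ times the cost of evaluating each intersection $|N(v_i)\cap A_k|$. I would resolve this with two observations. First, maintain an array $\mathrm{cnt}[\cdot]$ so that, before the while loop for $v_i$, $\mathrm{cnt}[k]=|N(v_i)\cap A_k|$; this is obtained by scanning $N(v_i)$ once and, for each already-placed neighbour $u$, incrementing $\mathrm{cnt}[\ell(u)]$, where $\ell(u)$ records the part of $u$, at cost $O(\deg(v_i))$. To avoid an $O(s)$ reset per vertex, I would log the touched parts in an auxiliary list and zero out only those after $v_i$ is placed. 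Second, the while loop advances past a part $k$ only when $|N(v_i)\cap A_k|>p_k\ge 0$, hence only when $v_i$ has at least one neighbour in $A_k$; therefore the number of parts skipped is at most the number of distinct parts containing a neighbour of $v_i$, which is at most $\deg(v_i)$, and the loop performs at most $\deg(v_i)+1$ iterations of $O(1)$ work each. Summing the scanning, looping, and resetting over all vertices gives $\sum_i O(\deg(v_i)+1)=O(n+m)$, and adding the single $O(s)$ initialization of the arrays yields the claimed $O(n+m+s)$ bound. I would finish by noting that lookups of $p_k$ and $\ell(u)$ are constant-time, so no hidden factors creep in.
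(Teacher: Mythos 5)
Your proposal is correct and follows essentially the same route as the paper: the error line is ruled out by summing $|N(v_i)\cap A_k|\ge p_k+1$ over all $k$ to contradict $|N^<(v_i)|<P$, degeneracy follows from Observation~\ref{obs-ddeg} via the while-loop stopping condition, and the runtime rests on the same amortized charge of each increment of $k$ to an edge from $v_i$ into the skipped part. Your explicit counter array with lazy resets is a welcome implementation detail that the paper leaves implicit, but it does not change the argument.
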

\begin{remark}
Note that as  $|N^<(v_i)| \le \Delta$, the lemma applies if $\Delta < P$.
Note also that the lemma applies if $G$ is $(P-1)$-degenerate and if the ordering $v_1, \cdots, v_n$ is a $(P-1)$-degenerate ordering. 
\label{rmk:greedy-degenerate}
\end{remark}

\begin{proof}
We first prove correctness and then analyze the runtime.

\medskip

\underline{Correctness of the algorithm.} 
We first show that the algorithm does not return ERROR. Towards a contradiction, we suppose otherwise. Then for some $i \in \{1, \dots, n\}$,  $|N^<(v_i) \cap A_k| \ge p_k +1$ for all $1\le k \le s$.
Thus, $|N^<(v_i)| \ge s +\sum_{k=1}^{s} p_k = P$, which is a contradiction. So the algorithm terminates normally and returns sets $A_1, \cdots, A_s$.

It remains to show that each $G[A_k]$ is $p_k$-degenerate.
In view of Observation~\ref{obs-ddeg}, it suffices to show that the ordering $v_1,\cdots,v_n$ restricted to $A_k$ is a $p_k$-degenerate ordering of $G[A_k]$.
In other words, for every $k\in [1,\cdots,s]$ 
and every vertex $v_i\in A_k$, we have $|A_k\cap N^<(v_i)| \le p_k$.
This directly follows from the condition of the while loop in Algorithm~\ref{algo-1} \cref{line-while-non-regular}. 
  
\medskip
\underline{Runtime analysis.} 
Clearly, it suffices to show that the total cost of the while loop at \crefrange{line-while-non-regular}{line-end-while-non-regular} is $O(n+m)$.
We establish this by an amortized complexity analysis, by noting that $k$ is not incremented more than $m$ times. To see this, note that $k$ is incremented because $|N(v) \cap A_k| > p_k$ for some $k \leq s$ and $v \in V(G)$. Let $w$ be a neighbor of $v$ in $A_k$ and attribute a cost of $1$ to the edge $vw$. Clearly, the edge $vw$ is not attributed a cost more than once. Now, since the initialisation of the $A_i$'s takes time $O(s)$, the total complexity is $O(n+m+s)$.
\end{proof}
\medskip

Recall that Lemma~\ref{lem:greedy} allows us to focus on inputs such that $\Delta = s+\sum_{i=1}^s p_i$.
In the next algorithm, Algorithm~\ref{algo-non-regular}, we consider  non-$\Delta$-regular graphs $G$ with $\Delta = s+\sum_{i=1}^s p_i$. 

\begin{algorithm}

\textbf{Input}: A non-regular connected graph $G$ with maximum degree $\Delta$, and some integers $s\geq 1$ and $p_1, \cdots, p_s$ such that $p_1 + \cdots + p_s = \Delta-s$\\\textbf{Output}: A partition of $V(G)$ into sets $A_1, \cdots, A_s$ such that each $G[A_i]$ is $p_i$-degenerate

\begin{algorithmic}[1]

\State $\Delta \gets$ the maximum degree of $G$
\State $v \gets$ a vertex of $G$ of degree less than $\Delta$ \label{line-v-algo-non-regular}
\State $T \gets$ a spanning tree of $G$ rooted at $v$
\State $v_1, \cdots, v_n \gets$ an ordering of $V(G)$ obtained by a post-order traversal of $T$ starting from $v$

\State $A_1, \cdots, A_s \gets$ \texttt{greedy\_partitioning}($G$, $v_1, \cdots, v_n$, $p_1, \cdots, p_s$)

\State \Return $A_1, \cdots, A_s$

\end{algorithmic}

\caption{\texttt{non-$\Delta$-regular\_partitioning}}~\label{algo-non-regular}
\end{algorithm}

\begin{lemma}
\Cref{algo-non-regular} runs in time $O(n+m)$ and returns a partition of $V(G)$ into sets $A_1, \cdots, A_s$ such that for all $1 \leq i \leq s$, $G[A_i]$ is $p_i$-degenerate.
\label{lemma-non-regular}
\end{lemma}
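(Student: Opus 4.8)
The plan is to show that the vertex ordering produced in \Cref{algo-non-regular} satisfies the hypothesis of \Cref{lem:greedy}, so that the call to \texttt{greedy\_partitioning} necessarily returns a valid partition rather than ERROR. Since here $p_1+\cdots+p_s = \Delta-s$, we have $P = s+\sum_{i=1}^s p_i = \Delta$, so it suffices to verify that $|N^<(v_i)| < \Delta$ for every $i$, where $v_1,\dots,v_n$ is the post-order traversal of the spanning tree $T$. Once this inequality is established, correctness of the returned partition follows immediately from \Cref{lem:greedy}.

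The key observation concerns how post-order interacts with the tree structure: a vertex is listed only after its entire subtree, so for every non-root vertex $v_i$ its parent in $T$ receives a strictly larger index than $v_i$. Consequently the tree edge joining $v_i$ to its parent does not contribute to $N^<(v_i)$, and hence $|N^<(v_i)| \le \deg_G(v_i) - 1 \le \Delta - 1 < \Delta$. For the root $v$ there is no parent edge to discard, but $v$ was chosen on \cref{line-v-algo-non-regular} to have degree strictly less than $\Delta$ (such a vertex exists precisely because $G$ is non-regular), so $|N^<(v)| \le \deg_G(v) \le \Delta - 1 < \Delta$ as well. This establishes the hypothesis of \Cref{lem:greedy} for all vertices. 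Note that this argument uses nothing about how $T$ was built beyond the defining property of post-order, so any spanning tree rooted at $v$ works.

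For the runtime I would check each line in turn: computing $\Delta$ and locating a vertex of smaller degree costs $O(n+m)$; building the spanning tree $T$ by a BFS or DFS from $v$ and producing its post-order traversal costs $O(n+m)$; and the final call to \texttt{greedy\_partitioning} costs $O(n+m+s)$ by \Cref{lem:greedy}. To absorb the $O(s)$ term, I would invoke \Cref{remark-greedy}: only the first $\Delta \le n$ entries of $(p_1,\dots,p_s)$ are ever inspected, so we may assume $s \le n$, giving a total of $O(n+m)$.

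The step I expect to require the most care is the post-order observation, namely arguing cleanly that the parent of every non-root vertex comes later in the ordering, since it is exactly this fact that saves one unit in the degree count and yields the strict inequality $|N^<(v_i)| < \Delta$ needed to meet the condition $P = \Delta$. The handling of the root, where this saving is instead provided by the non-regularity of $G$, is the natural companion case. Everything else is routine bookkeeping.
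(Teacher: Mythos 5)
Your proposal is correct and follows essentially the same route as the paper: you show the post-order traversal yields a $(\Delta-1)$-degenerate ordering (parent indexed later for non-root vertices, root chosen with degree $<\Delta$ by non-regularity), then invoke \Cref{lem:greedy} with $P=\Delta$, and bound the extra $O(s)$ term by noting $s$ is at most $\Delta\le n$ (the paper uses $s\le\Delta$ directly from the precondition $\sum p_i=\Delta-s$ with $p_i\ge 0$). No gaps.
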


\begin{proof}
We first prove correctness and then analyze the runtime. 

\medskip

\underline{Correctness of the algorithm.} 
By doing a post-order traversal of $T$ starting from $v$, the ordering $v_1, \cdots, v_n$ computed by \Cref{algo-non-regular} is a $(\Delta-1)$-degenerate ordering. In other words: 
\begin{equation*}~\label{eqn-p1}
\text{Every vertex } v_i \text{ has at most } \Delta -1 \text{ neighbors in } N^<(v_i).
\end{equation*}
Indeed, this is clear for $v_n=v$, as $|N^<(v)|= deg(v) < \Delta$. This is also clear for every vertex $v_i\neq v_n$, as its parent neighbor in $T$ does not belong to $N^<(v_i)$, hence $|N^<(v_i)|\le deg(v_i)-1 \le \Delta-1$. Then, given such an ordering, Lemma~\ref{lem:greedy} guarantees that the call of \texttt{greedy\_partitioning} returns a partition of $V(G)$ with the required properties.

\medskip
\underline{Runtime analysis.} 
Clearly, computing the maximum degree of $G$ as well as finding a vertex $v$ of degree less than $\Delta$ can be done in time $O(n+m)$. Similarly, building a spanning tree rooted at $v$ and doing a post-order traversal can be done in time $O(n + m)$. 

By Lemma~\ref{lem:greedy} the call to \texttt{greedy\_partitioning} takes  time $O(n+m+s)$. Finally, since here we have $s\le \Delta$, the running time is $O(n+m)$.
\end{proof}

\section{The case of regular graphs with $s = 2$\label{sec:regular}}
We now consider the case not handled by the previous section, that is the case where $G$ is $\Delta$-regular and where $\Delta = s+\sum_{i=1}^s{ p_i}$. We will see in \Cref{section-proof-general-case} that the case with arbitrary $s$ can be easily derived from the $s=2$ case. This is why we restrict to $s=2$ here.
So in this section we assume that we have two integers $p_A, p_B \geq 0$ such that $p_A+p_B = \Delta-2$.

Before giving the details, we give a sketch of the proof. Applying \Cref{algo-non-regular} to $G$ returns an ordering in which only vertex $v_n$ has more than $\Delta-1$ neighbors in $N^<(v_n)$ (it has exactly $\Delta$ of them). Our strategy is thus to partition $v_n$'s neighborhood more carefully in order to ease $v_n$'s coloring.

To do so, we consider a block decomposition of $G$ and select one of its end-blocks. An easy case is when this block is a ``quasi clique" (see \Cref{fig-K-}). Otherwise, we show that we can find  a vertex $z$ whose neighborhood $N(z)$ has desirable properties. We can then force the coloring of almost all vertices of $N(z)$ and call \Cref{algo-non-regular} using $z$ as  the root for the spanning tree (here $z$ plays the role of $v_n$). The most difficult part is to show that such a vertex with special neighborhood can be found in linear time.

\begin{definition}~\label{def-z-X}
For a graph $G$ with maximum degree $\Delta \geq 3$, we say that a pair $(z,X)$ formed by a vertex $z\in V(G)$ and a set $X\subseteq N(z)$ is a \emph{special neighborhood} if
\begin{itemize}
\item[a)] $|X| = \Delta - 1$,
\item[b)] $G[X]$ is not a complete graph, and
\item[c)] $G\setminus X$ is connected,
\end{itemize}
\end{definition}

Some graphs do not possess such a special neighborhood, and to deal with them we have to deal with quasi-cliques. A
\emph{quasi-clique}, denoted $K_*^-$, is the graph obtained from $K_{\Delta +1}$ by subdividing exactly one edge (see Figure~\ref{fig-K-}). Note that this graph has a degree-two vertex and that all the $\Delta +1$ remaining vertices have degree $\Delta$.
We can now present the algorithm for the case when $s=2$ and $G$ is $\Delta$-regular.

\begin{algorithm}

\textbf{Input}: A $\Delta$-regular connected graph $G$, for some $\Delta\ge 3$, distinct from $K_{\Delta+1}$, and two integers $p_A \leq p_B$ such that $p_A+p_B=\Delta-2$.\\\textbf{Output}: A partition of $V(G)$ into sets $A$ and $B$ such that $G[A]$ and $G[B]$ are $p_A$-degenerate and $p_B$-degenerate, respectively.

\begin{algorithmic}[1]

\State $A, B \gets \emptyset$

\State Perform a block-decomposition of $G$.
\State $H \gets$ an end-block of the decomposition
\State $v \gets$ the vertex linking $H$ to the rest of $G$, or any vertex if $G=H$

\If{$H$ is isomorphic to $K_*^-$} \label{line-case-quasi-clique}
	\State $A,B \gets$ \texttt{non-$\Delta$-regular\_partitioning}$(G\setminus (H \setminus \{ v\}),p_A,p_B)$ \label{line-call-algo-non-regular}
	\State Let $Y\in \{A,B\}$ be the set containing $v$ and let $Y'$ be the other set
	\State Add to $Y'$ the two neighbors of $v$ in $H$, as well as $p_{Y'}$ other vertices from $H$.
	\State Add to $Y$ the other $p_{Y}+1$ vertices of $H$.

\Else \label{line-case-no-quasi-clique}
	\State $(z, X) \gets $ \texttt{get\_special\_neighborhood}$(H, v)$ // See Algorithm~\ref{algo-z-X}
	\State $A \gets$  $p_A+2$ vertices of $X$, including two that are non-adjacent \label{line-pB-verts-to-B}
 	\State $B \gets$  the other $p_B -1$ vertices of $X$ \label{line-pA-other-verts-to-A}
	\State $T \gets $ a spanning tree of $G \setminus X$ rooted at $z$
	\State $v_{\Delta}, \cdots, v_n \gets $ ordering of $V \setminus X$ that is a post-order traversal of $T$ starting at $z$.

	\For{$v_i$ from $v_{\Delta}$ to $v_n$}

 		 \If{$|N(v_i) \cap A| \leq p_A$ } // We consider the neighborhood in $G$, not in $G \setminus X$. \label{line-test-fin-algo-delta-regular}
   
    		\State $A \gets A \cup v_i$
    
  		\Else

   			\State $B \gets B \cup v_i$ \label{line-ajout-vert-to-B}
   
   		\EndIf
   
	\EndFor

\EndIf

\State \Return $A, B$

\end{algorithmic}

\caption{\texttt{$\Delta$-regular\_bipartitioning}}~\label{algo-regular}
\end{algorithm}

\begin{theorem}
\label{thm:main-bipartition}
 \Cref{algo-regular} partitions $V(G)$ into two sets $A$ and $B$ such that $G[A]$ is $p_A$-degenerate and $G[B]$ is $p_B$-degenerate. It runs in time $O(n+m)$. 
\end{theorem}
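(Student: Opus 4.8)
The plan is to verify correctness separately for each of the two branches of \Cref{algo-regular} by exhibiting, via \Cref{obs-ddeg}, a $p_A$-degenerate ordering of $G[A]$ and a $p_B$-degenerate ordering of $G[B]$, and to check that the vertex counts force $(A,B)$ to be a genuine partition. The linear running time will then follow because every ingredient runs in time $O(n+m)$: the block decomposition, the extraction of an end-block $H$ and its cut vertex $v$, the test $H\cong K_*^-$, the spanning tree and post-order traversal, and the recursive call governed by \Cref{lemma-non-regular}. I would implement the final loop (\crefrange{line-test-fin-algo-delta-regular}{line-ajout-vert-to-B}) with a Boolean membership array for $A$, so that each test $|N(v_i)\cap A|\le p_A$ costs $O(\deg(v_i))$, giving $O(m)$ overall. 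I treat the correctness of \texttt{get\_special\_neighborhood}, namely that it returns in linear time a special neighborhood in the sense of \Cref{def-z-X} whenever $H\not\cong K_*^-$, as a black box to be established separately; the genuine combinatorial difficulty of the paper is hidden there.

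For the branch at \cref{line-case-quasi-clique}, I would first argue that $v$ must be the degree-two subdivision vertex of $H\cong K_*^-$: every vertex of $H\setminus\{v\}$ keeps all its neighbours inside $H$, so $\Delta$-regularity of $G$ forces it to have degree $\Delta$, leaving the unique degree-two vertex of $K_*^-$ to be $v$. Hence $G'=G\setminus(H\setminus\{v\})$ is connected, has maximum degree $\Delta$, and contains $v$ with degree $\Delta-2<\Delta$, so it is non-regular and the call at \cref{line-call-algo-non-regular} is legitimate and correct by \Cref{lemma-non-regular}. It then remains to see that inserting the $\Delta+1$ vertices of $H\setminus\{v\}$ preserves degeneracy. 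Writing $x,y$ for the two neighbours of $v$ in $H$ (non-adjacent in $H\setminus\{v\}=K_{\Delta+1}-xy$), the vertices added to $Y'$ induce $K_{2+p_{Y'}}-xy$ and those added to $Y$ induce a clique $K_{p_Y+1}$ non-adjacent to $v$. Using the elementary facts that a complete graph on $k$ vertices has degeneracy $k-1$ while a non-complete graph on $k$ vertices has degeneracy at most $k-2$, both pieces have the right degeneracy; and since $H\setminus\{v\}$ sends no edge to $G'\setminus\{v\}$, these pieces form components disjoint from the recursively coloured part, so degeneracies combine. A count confirms $(2+p_{Y'})+(p_Y+1)=\Delta+1$, so the partition is exact.

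The branch at \cref{line-case-no-quasi-clique} is the most delicate, and I expect the degeneracy of $G[B]$ to be the main obstacle. Given a special neighborhood $(z,X)$ with $|X|=\Delta-1$, $G[X]$ non-complete, and $G\setminus X$ connected, note $z\notin X$, so the post-order ordering $v_\Delta,\dots,v_n=z$ of $G\setminus X$ rooted at $z$ spans $V\setminus X$; the split of $X$ into $p_A+2$ and $p_B-1$ vertices is feasible because $p_A\le p_B$ and $\Delta\ge3$ give $p_B\ge1$. I would certify degeneracy with the ordering that places the chosen $X$-vertices first and then $v_\Delta,\dots,v_n$. For $G[A]$, the $p_A+2$ chosen $X$-vertices induce a non-complete graph (a non-edge was included), hence a graph of degeneracy at most $p_A$, while every later $v_i$ added to $A$ satisfies $|N(v_i)\cap A|\le p_A$ at insertion, which equals its number of earlier $A$-neighbours. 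For $G[B]$, the key counting is that for $i<n$ the tree-parent of $v_i$ comes later in the post-order, so $v_i$ has at most $\Delta-1$ earlier neighbours in total; since $v_i$ was sent to $B$ precisely because at least $p_A+1$ already-placed neighbours lay in $A$, at most $(\Delta-1)-(p_A+1)=p_B$ earlier neighbours lie in $B$. The root $z$ is the single vertex with $\Delta$ earlier neighbours rather than $\Delta-1$, but this is absorbed by the facts that all of $X\subseteq N(z)$ precedes it and that at least $p_A+2$ of those $X$-vertices are in $A$, again leaving at most $\Delta-(p_A+2)=p_B$ earlier neighbours in $B$. This last accounting is exactly why the algorithm places $p_A+2$ (and not $p_A+1$) vertices of $X$ into $A$, and it is the step I would be most careful to get right.
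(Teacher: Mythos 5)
Your proposal is correct and follows essentially the same route as the paper: the same case split on whether $H\cong K_*^-$, the same disjoint-union degeneracy argument for the quasi-clique branch, and the same ordering/counting argument for the other branch (placing $X$ first, using the non-edge inside $X\cap A$, bounding $|N^<(v_i)|\le\Delta-1$ via the post-order parent, and the crucial $p_A+2$ accounting at the root $z$), while deferring \texttt{get\_special\_neighborhood} exactly as the paper does via \Cref{lemma-get-special-neighborhood-correction,lemma-get-special-neighborhood-complexity}. The only step you elide is that the subroutine returns a special neighborhood of $H$ rather than of $G$, so one must still observe that $G\setminus X$ is the union of the connected graphs $G\setminus(V(H)\setminus\{v\})$ and $H\setminus X$, which share the vertex $v\notin X$, and is therefore connected.
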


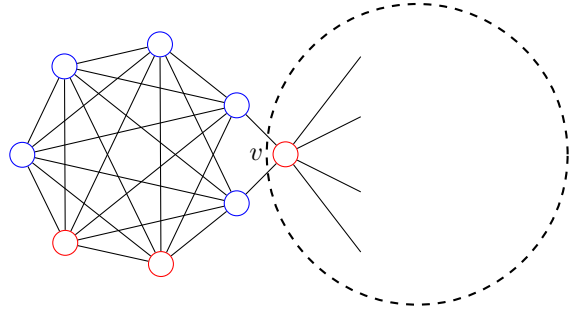
\begin{figure}
    \centering

   \begin{tikzpicture}
    
      \node[shape=circle,draw=blue] (4) at (-25.75:1.5){};
    \node[shape=circle,draw=blue] (3) at (25.75:1.5){};
    \node[shape=circle,draw=blue] (2) at (77.25:1.5){};
    \node[shape=circle,draw=blue] (1) at (128.75:1.5){};
    \node[shape=circle,draw=blue] (7) at (180.25:1.5){};
    \node[shape=circle,draw=red] (6) at (231.75:1.5){};
    \node[shape=circle,draw=red] (5) at (283.25:1.5){};
    
    \node[shape=circle,draw=red,label=west:$v$] (v) at (0:2){};
    
    \draw[black, thick, dashed] (3.75, 0) circle (2cm);

    \path [-,>=latex](1) edge node[left] {}(2);
    \path [-,>=latex](1) edge node[left] {}(3);
    \path [-,>=latex](1) edge node[left] {}(5);
    \path [-,>=latex](2) edge node[left] {}(3);
    \path [-,>=latex](2) edge node[left] {}(4);
    \path [-,>=latex](3) edge node[left] {}(v);
    \path [-,>=latex](4) edge node[left] {}(v);
    \path [-,>=latex](3) edge node[left] {}(5);
    \path [-,>=latex](2) edge node[left] {}(6);
    \path [-,>=latex](3) edge node[left] {}(7);
    \path [-,>=latex](4) edge node[left] {}(1);
    \path [-,>=latex](4) edge node[left] {}(5);
    \path [-,>=latex](4) edge node[left] {}(6);
    \path [-,>=latex](5) edge node[left] {}(7);
    \path [-,>=latex](5) edge node[left] {}(2);
    \path [-,>=latex](5) edge node[left] {}(6);
    \path [-,>=latex](6) edge node[left] {}(7);
    \path [-,>=latex](6) edge node[left] {}(1);
    \path [-,>=latex](6) edge node[left] {}(3);
    \path [-,>=latex](7) edge node[left] {}(1);
    \path [-,>=latex](7) edge node[left] {}(2);
    \path [-,>=latex](7) edge node[left] {}(4);
    
    \path [-,>=latex](v) edge node[left] {}(3, 0.5);
    \path [-,>=latex](v) edge node[left] {}(3, -0.5);
    \path [-,>=latex](v) edge node[left] {}(3, 1.3);
    \path [-,>=latex](v) edge node[left] {}(3,-1.3);

    \end{tikzpicture}
    \caption{The graph $K_*^-$ for $\Delta=6$, and some vertex partitioning for $p_A=1$ and $p_B=3$ where the vertices in $A$ (which contains $v$) are represented in red and vertices in $B$ in blue.}
    \label{fig-K-}
\end{figure}

\begin{proof}
We first prove the correctness and then analyze the runtime. 
For the properties of the algorithm \texttt{get\_special\_neighborhood}, we refer to the forthcoming \Cref{lemma-get-special-neighborhood-correction,lemma-get-special-neighborhood-complexity}.\\

\underline{Correctness of the algorithm.}
We have two subcases: either the considered end-block is isomorphic to $K_*^-$, or not.

We first consider the case when the end-block $H$ is isomorphic to $K_*^-$ with $v$ as its cut-vertex (so the condition of \cref{line-case-quasi-clique} is met).

Thanks to \texttt{non-$\Delta$-regular\_partitioning} (see Lemma~\ref{lemma-non-regular}), we can partition the vertex set of $G\setminus (V(H) \setminus \{v\})$ into sets $A$ and $B$ with the required degeneracy properties. Assume without loss of generality that $v \in A$. We extend this partial partition of $V(G)$ to the other vertices in $H$ by putting the two neighbors of $v$ in $B$, and among the $\Delta -1$ remaining vertices of our copy of $K_*^-$ we put $d_{B}$ of them in $B$ and the remaining $\Delta-1-d_B = d_A + 1$ vertices go to $A$ (see an example in Figure~\ref{fig-K-}). Note that $G[A]$ is the \textbf{disjoint} union of two $d_A$-degenerate graphs, namely $G[(A\setminus (V(H)\setminus\{v\})]$ and $G[A \cap (V(H)\setminus \{v\})] \simeq K_{d_A +1}$, hence $G[A]$ is $d_A$-degenerate. The same holds for $G[B]$, with $G[B]\setminus V(H)$ and $G[B\cap V(H)]$, which is isomorphic to the complete graph on $d_{B}+2$ vertices minus an edge, hence $G[B]$ is $d_{B}$-degenerate. \\

It remains to consider the case when $H$ is not isomorphic to $K_*^-$. Contrarily to the previous case, we will not split the graph into $H$ and $G\setminus H$. Instead, the call to \texttt{get\_special\_neighborhood} gives us a special neighborhood $(z,X)$ of $H$ such that $v \notin X$. Before proceeding with the proof, note that we postpone the proof of the existence of such a special neighborhood and of the correctness of \texttt{get\_special\_neighborhood} to \Cref{lemma-get-special-neighborhood-correction}.
Note that this special neighborhood is also a special neighborhood with respect to $G$. Points a) and b) clearly hold. For c), note that the graphs $G\setminus (V(H)\setminus \{v\})$ and $H\setminus X$ are connected, and both contain vertex $v$, therefore their union, $G\setminus X$, is also connected. 
Let us now show that  $G[A]$ is $p_A$-degenerate and $G[B]$ is $p_B$-degenerate.

Extend the ordering $v_{\Delta}, \cdots, v_{n}$, by assigning the vertices of $X$ to $v_1,\ldots, v_{\Delta -1}$. We impose a single constraint: $v_{\Delta -2}$ and $v_{\Delta -1}$ must be non-adjacent vertices of $X\cap A$.

The degeneracy of $G[A]$ follows from Observation~\ref{obs-ddeg} by considering the ordering $v_1, \cdots, v_n$ restricted to $A$. To see this, first note that the vertices of $X\cap A$
have at most $p_A$ neighbors among $\{v_1,\ldots ,v_{\Delta-2}\}\cap A$. Thus, $|A\cap N^<(v_i)| \le p_A$ for indices up to $i=\Delta-1$. For $i \geq \Delta$, the test \cref{line-test-fin-algo-delta-regular} ensures that for every vertex $v_i\in A$ we have $|A\cap N^<(v_i)| \le p_A$. Therefore, $G[A]$ is $p_A$-degenerate.

For $G[B]$ also, consider the ordering $v_1, \cdots, v_n$ restricted to $B$. The vertices of $X\cap B$ have at most $p_B -2$ neighbors in $X\cap B$. Thus, $|B\cap N^<(v_i)| \le p_B$ for indices up to $i=\Delta-1$. 
For the vertices $v_i\in B$ with $\Delta \le i< n$, we have  that $ N^<(v_i)\le \Delta +1$.
This follows from the post-order traversal considered, as in the proof of Lemma~\ref{lemma-non-regular}.
Then, as \texttt{$\Delta$-regular\_bipartitioning} adds such a vertex $v_i$ to $B$ (\cref{{line-ajout-vert-to-B}}) only if $|A\cap N^<(v_i)| \ge p_A +1$, we have
$$|B\cap N^<(v_i)| = |N^<(v_i)| - |A\cap N^<(v_i)| \le (\Delta-1) - (p_A +1) = p_B.$$
For $v_n = z$, it is different. Since at \cref{line-pB-verts-to-B} we put $p_A+2$ of its neighbors in $A$, it shall be put in $B$ and we have $$|B\cap N^<(v_n)| = |N^<(v_n)| - |A\cap N^<(v_n)| \le \Delta - (p_A +2) = p_B.$$
Therefore, $G[B]$ is $p_B$-degenerate. This completes the proof of correctness.

\medskip
\underline{Runtime analysis.}
Decomposing $G$ into blocks can be performed in linear time~\cite{tarjan1985efficient}, and testing if an end-block is isomorphic to $K_*^-$ (\cref{line-case-quasi-clique}) can be checked in time linear in the size of the end-block. Besides, we can detect in time $O(n+m)$, which branch of the {\bf if} statement to enter. In the first case ($H$ is isomorphic to $K_*^-$) identifying $v$ and running \texttt{non-$\Delta$-regular\_partition} takes time $O(n+m)$ (by \Cref{lemma-non-regular}). Splitting the other vertices of $U$ into $A$ and $B$ can be done in the same complexity, hence the overall complexity for this subcase is $O(n+m)$. In the second case, finding a special neighborhood takes time $O(n+m)$ by \Cref{lemma-get-special-neighborhood-complexity}. Then, similarly as in Section 2, the complexity of the remaining instructions is clearly $O(n+m)$, which concludes the proof.

\end{proof}

%%%%%%%%%%%%%%%%%%%%%%%%%%%%%%%%%%%%%

\begin{figure}
    \centering

   \begin{tikzpicture}
  
    \node[shape=circle,draw=black,label=west:$v$,minimum size=0.5mm] (v) at (0,0){};
    \draw (0,-2.5) node[] {$L_0$};
    \draw (1,0) ellipse (0.4cm and 2cm);
    \draw (1,-2.5) node[] {$L_1$};
    \draw (2,0) ellipse (0.4cm and 2cm);
    \draw (2,-2.5) node[] {$L_2$};
    \draw (3,0) node[] {\Huge{$\ldots$}};
    \draw (3,-2.5) node[] {{$\ldots$}};
    \draw (4,0) ellipse (0.4cm and 2cm);
    \draw (4,-2.5) node[] {$L_{\ell-1}$};
    \draw (5,0) ellipse (0.4cm and 2cm);
    \draw (5,-2.5) node[] {$L_{\ell}$};

    %X :
    \draw[black, dashed] (4.5,0) ellipse (0.8cm and 0.5cm);
    \draw (4,-0.6) node[] {$X$};
    
    %edges :
    
    \path [-,>=latex,thick](v) edge node[left] {}(0.8,1.5);
     \path [-,>=latex,thick](v) edge node[left] {}(0.8,0);
     \path [-,>=latex,thick](v) edge node[left] {}(0.8,-1.5);
     
     \path [-,>=latex,thick](1.2,1.5) edge node[left] {}(1.8,1.2);
     \path [-,>=latex,thick,thick](1.2,-1.5) edge node[left] {}(1.8,-1.2);
     
     \path [-,>=latex,thick,thick](1.2,0.3) edge node[left] {}(1.8,-0.3);
     
      \path [-,>=latex,thick](4.2,1.4) edge node[left] {}(4.8,1.5);
     \path [-,>=latex,thick](4.2,-1.4) edge node[left] {}(4.8,-1.5);
     
     %z :
     
     \node[shape=circle,draw=black,label=north:$z$] (z) at (5,0.9){};
     \path [-,>=latex,thick](z) edge node[left] {}(5,0);
     \path [-,>=latex,thick](z) edge node[left] {}(4,0);
     \path [-,>=latex,thick](z) edge node[left] {}(4,0.5);

    %u:
    \node[shape=circle,draw=black,label=north:$u$] (u) at (5,-1){};
    
    \path [-,>=latex,thick,thick](u) edge node[left] {}(4,-1);
    \path [-,>=latex,thick,thick](u) edge node[left] {}(4,-0.2);
    
    \end{tikzpicture}
    \caption{The layers $L_0,L_1,\cdots, L_{\ell-1},L_\ell$ of $H$.}
    \label{fig_couches}
\end{figure}
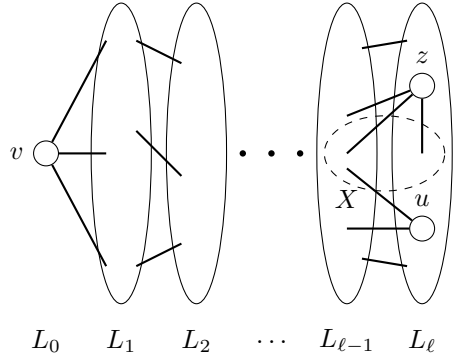

\begin{algorithm}

\textbf{Input}: A 2-connected graph $H$ distinct from $K^-_*$ and from $K_{\Delta+1}$, and a vertex $v \in H$. All vertices except possibly $v$ have degree $\Delta$ in $H$.\\\textbf{Output}: A special neighborhood $(z, X)$ of $H$ such that $v \notin X$.
\caption{\texttt{get\_special\_neighborhood}}~\label{algo-z-X}
\begin{algorithmic}[1]

\State Perform a BFS on $H$ starting from $v$
\State Partition the vertices into $L_0 = \{v\}, L_1, \cdots, L_\ell$ : $L_i$ contains the vertices at distance $i$ from $v$
\State $k \gets $min($|N(z) \cap L_{\ell-1}|$) for each $z\in L_\ell$ \label{line-k-equals-min}

\If{$k \geq 3$}
\State $z \gets$ a vertex in $L_\ell$ with exactly $k$ neighbors in $L_{\ell-1}$
\State $x_1, x_2 \gets$ two non-adjacent neighbors of $z$.
\State $x_3 \gets$ a neighbor in $N(z) \cap L_{\ell-1}$ distinct from $x_1$ and $x_2$
\State $X \gets N(z) \setminus x_3$
\State \Return $(z,X)$ \label{line-return-k3}

\EndIf\\

\For{each \textbf{non-marked} $z \in L_\ell$  with exactly $k$ neighbors in $L_{\ell-1}$} \label{line-non-marked-for}
	\If{$\exists$ a non-edge $x_1,x_2$ in $ N(z)$ such that $|\{x_1, x_2\} \cap L_{\ell-1}| < k$} \label{line-test-easy-loop}
		\State $x_3 \gets$ a vertex in $(N(z)\cap L_{\ell-1}) \setminus \{x_1, x_2\}$ \label{line-select-easy-loop}
		\State $X \gets N(z) \setminus x_3$
		\State \Return $(z,X)$ \label{line-return-easy-case}
	
	\Else
		\State Mark $z$ as well as all its neighbors in $L_\ell$. \label{line-marking}
	\EndIf
\EndFor \label{line-end-for-non-marked}\\

\If{$k==1$}
\State $u \gets$ a vertex of $L_\ell$ with exactly one neighbor in $L_{\ell-1}$ \label{line-case-k1}
\State $C \gets H[N[u]\cap L_\ell]$
\State $x_1 \gets$ a vertex of $L_{\ell-1} \cap N(C)$ with at most $|C|/2$ neighbors in $C$
\State $z \gets$ a neighbor of $x_1$ in $C$\label{line-build-z-x1}
\State $x_2, x_3 \gets$ two vertices $C \setminus N(x_1)$
\State $X \gets N(z) \setminus x_3$ \label{line-build-X-k1}
\State \Return $(z, X)$ \label{line-return-k1}\\

\Else $\;$// Necessarily $k=2$
\State $u \gets $ a vertex of $L_\ell$ having exactly two neighbors in $L_{\ell-1}$ \label{line-case-k2}
\State $x_1, x_2 \gets$ the neighbors of $u$ in $L_{\ell-1}$
\State $y_1 \gets $ the neighbor of $x_1$ in $L_{\ell-2}$
\State $y_2 \gets $ the neighbor of $x_2$ in $L_{\ell-2}$
\If{$y_1 == v$}
	\State Exchange $x_1$ and $x_2$, also $y_1$ and $y_2$
\EndIf

\State $X \gets N(x_1) \setminus \{u\}$
\State \Return $(x_1, X)$\label{line-end-algo-special}

\EndIf

\end{algorithmic}

\end{algorithm}

\begin{lemma}~\label{lem-z-X}
Algorithm~\ref{algo-z-X} returns a special neighborhood $(z,X)$ of $H$, such that $v\notin X$.
\label{lemma-get-special-neighborhood-correction}
\end{lemma}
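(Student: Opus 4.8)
The plan is to verify that Algorithm~\ref{algo-z-X} returns a pair $(z,X)$ satisfying the three conditions a), b), c) of Definition~\ref{def-z-X}, together with $v\notin X$, by a case analysis that mirrors the structure of the algorithm. In every branch the set $X$ is defined as $N(z)\setminus\{x_3\}$ for some appropriate vertex $z$ and some neighbor $x_3$, so condition a) ($|X|=\Delta-1$) will be immediate from the fact that $H$ is $\Delta$-regular at all vertices except possibly $v$, provided I check that the chosen $z$ has degree $\Delta$ (which holds as long as $z\neq v$, something I must confirm in each branch). Condition b) ($G[X]$ not complete) will follow from the two designated non-adjacent vertices $x_1,x_2$ both lying in $X$, i.e. from $x_1,x_2\neq x_3$; so the verification reduces to checking that in each branch $x_3$ is chosen distinct from the exhibited non-edge. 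Thus the genuine work is entirely in condition c): showing that removing $X$ keeps $H$ connected.

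The structure I would follow is dictated by the value $k=\min_{z\in L_\ell}|N(z)\cap L_{\ell-1}|$ computed on line~\ref{line-k-equals-min} from the BFS layering. First I would dispose of the easy branch $k\ge 3$: here I pick $z\in L_\ell$ with exactly $k\ge3$ neighbors in $L_{\ell-1}$ and drop one of them, $x_3$, from $N(z)$ to form $X$. The point is that every vertex of $L_{\ell-1}$ still has a BFS-path to $v$ avoiding $X$, and $z$ (now isolated from the deeper layers only through $X$) is the unique vertex whose removal I need to worry about — but $z\notin X$, and the remaining $\ge2$ neighbors of $z$ in $L_{\ell-1}$ keep the rest of the graph attached. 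I would then treat the general loop over non-marked vertices (lines~\ref{line-non-marked-for}--\ref{line-end-for-non-marked}): when a non-edge with few endpoints in $L_{\ell-1}$ is found, the deletion of $X=N(z)\setminus\{x_3\}$ leaves at least one neighbor $x_3\in L_{\ell-1}$ of $z$ intact, anchoring $z$ and the lower layers to $v$; here I must argue that no vertex of $L_\ell$ gets disconnected, which is where the marking scheme and the minimality of $k$ enter. Finally the two residual cases $k=1$ and $k=2$ require the most care, since then $z$ (or $x_1$) has very few attachments to $L_{\ell-1}$ and one must exploit the $2$-connectivity of $H$ and the component structure $C=H[N[u]\cap L_\ell]$ to guarantee that an alternative route to $v$ survives the removal of $X$.

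The main obstacle will be exactly condition c) in the low-$k$ branches. When $k=1$ the vertex $u$ has a single neighbor in $L_{\ell-1}$, so naively removing $N(z)$ could sever the entire last layer from $v$; the algorithm circumvents this by working inside the component $C$ of $H[L_\ell\cap N[u]]$, choosing an external attachment point $x_1\in L_{\ell-1}$ with at most $|C|/2$ neighbors in $C$ and taking $z\in C\cap N(x_1)$, so that at least half of $C$ — and hence a path back to $v$ through the remaining layers — survives. I would need to prove that such an $x_1$ exists (an averaging argument over the edges between $L_{\ell-1}$ and $C$, using $2$-connectivity to ensure $C$ has at least two external neighbors) and that $x_2,x_3\in C\setminus N(x_1)$ can indeed be found. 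The $k=2$ case is analogous but cleaner: $u$ has exactly two neighbors $x_1,x_2$ in $L_{\ell-1}$, I delete $X=N(x_1)\setminus\{u\}$ and use the second attachment $x_2$ (and the swap ensuring $y_1\neq v$) to keep everything connected to $v$. Throughout, I would lean on the $2$-connectivity hypothesis on $H$ and on the exhaustiveness of the case split — the fact that if none of the earlier branches returned, then $k\in\{1,2\}$ and a suitable $u$ must exist — to conclude that the algorithm always reaches a valid return statement.
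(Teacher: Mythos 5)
Your plan mirrors the paper's structure (BFS layers, case split on $k=\min_{z\in L_\ell}|N(z)\cap L_{\ell-1}|$, the unmarked-vertex loop, then the residual cases $k=1$ and $k=2$), but several steps would not go through as you describe them, and the parts that carry the real weight are deferred. The clearest error is in the $k=1$ branch: you claim $x_1$ is chosen with at most $|C|/2$ neighbors in $C$ ``so that at least half of $C$ survives.'' It does not: since $C$ is a clique of order $\Delta$ containing $z$, we have $N(z)=(C\setminus\{z\})\cup\{x_1\}$, so $X=N(z)\setminus\{x_3\}$ deletes \emph{all} of $C$ except $z$ and $x_3$. The $|C|/2$ bound serves only to guarantee two non-neighbors $x_2,x_3$ of $x_1$ inside $C$; connectivity is then recovered because $x_3$'s unique $L_{\ell-1}$-neighbor is not $x_1$ (anchoring $z$ via $x_3$), while each $w\in L_\ell\setminus C$ needs a separate argument: by $2$-connectivity there is a $w$--$v$ path avoiding $x_1$, and since $C$ is a connected component of $H[L_\ell]$ that path reaches $L_{\ell-1}\setminus\{x_1\}$ before it could touch $X$. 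None of this appears in your sketch. A second concrete gap: your blanket recipe for condition b) (``the two designated non-adjacent vertices $x_1,x_2$ both lie in $X$'') fails in the $k=2$ branch, where the returned centre is $x_1$ itself, $x_2\notin N(x_1)$ so $x_2\notin X$, and the witnessing non-edge is between $y_1\in L_{\ell-2}$ and a vertex of $N(x_1)\cap L_\ell$.

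Beyond these, the structural facts that make the low-$k$ cases work are asserted but never derived: that if the loop terminates without returning then $H[N[u]\cap L_\ell]$ is a clique (which requires checking that \emph{marked} vertices, skipped by the loop, would also have failed the test of \cref{line-test-easy-loop}); that consequently $C$ is a connected component of $H[L_\ell]$ and, for $k=2$, that $x_1,x_2$ have their remaining neighbours $y_1,y_2$ in $L_{\ell-2}$ with $y_1\neq y_2$ (this uses $H\neq K_*^-$ together with $2$-connectivity, and is what makes the swap ensuring $y_1\neq v$ possible); and that $\ell\ge 2$, which is what guarantees $v\notin X$ throughout. Finally, a minor but telling slip in the $k\ge3$ case: after removing $X$ the vertex $z$ retains exactly one neighbour ($x_3$), not ``$\ge 2$''; what keeps the other vertices of $L_\ell$ attached is that each has at least $k$ neighbours in $L_{\ell-1}$ while $X$ deletes only $k-1$ of them, i.e.\ the minimality of $k$ — which you invoke only later. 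As it stands the proposal is a correct table of contents for the paper's proof, but the arguments it supplies for connectivity are either wrong or missing.
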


\begin{proof}
    We must show that the returned pair has properties a), b) and c) of \Cref{def-z-X}. Since $H$ is different from $K_{\Delta + 1}$ and all vertices of $H\setminus\{v\}$ have degree $\Delta$, the neighborhood of every vertex $z\neq v$ contains a non-edge. So finding some $(z,X)$ satisfying properties a) and b) of \Cref{def-z-X} is easy. To find a special neighborhood, the difficulty thus lies in guaranteeing the connectivity of $H\setminus X$.

    We begin by partitioning $V(H)$ into sets $L_0,L_1, \dots, L_{\ell}$  so that a vertex belongs to $L_i$ if it is at distance $i$ from $v$. Since $H$ is not $K_{\Delta+1}$ and since vertices in $L_1$ have degree $\Delta$, at least one vertex in $L_1$ has a neighbor in $L_2$, hence $\ell \geq 2$. By the execution of \cref{line-k-equals-min}, $k$ is the minimum number of neighbors a vertex $z$ has in $L_{\ell-1}$ over all $z \in L_\ell$. We split the proof into three subcases with respect to the value of $k$.

\underline{Case $k \geq 3$}.
Since the graph contains no $K_{\Delta+1}$, there is a non-edge in $H[N(z)]$, say $x_1x_2$. Moreover, as $k\geq 3$, there exists a vertex $x_3\in N(z)\cap L_{\ell-1}$ distinct from $x_1$ and $x_2$. Setting $X$ to $N(z)\setminus \{x_3\}$ ensures a) and b) as $X$ contains $x_1$ and $x_2$.  Since the vertices of $X$ belong to $L_{\ell-1} \cup L_\ell$, we satisfy the condition that $v\notin X$.

It remains to show condition c), that is the fact that $H \setminus X$ is connected. It suffices to show that any vertex in $H \setminus X$ is connected to $v$. This follows by induction from $L_0$ to $L_{\ell}$. Indeed, every vertex in $L_i$ has a neighbor in $L_{i-1}\setminus X$. For $i<\ell$, this holds because
actually $L_{i-1}\setminus X = L_{i-1}$, as $X\subseteq L_{\ell-1}\cup L_\ell$.
For $i=\ell$, any vertex in $L_\ell$ has at least $k$ neighbors in $L_{\ell-1}$. As $X$ contains only $k-1$ vertices of $L_{\ell-1}$, any vertex in $L_\ell$ has a neighbor in $L_{\ell-1}\setminus X$ hence $L_\ell \setminus X$ is also connected to $v$.

\medskip

\underline{Case $k = 1$}. In the easiest case, $(z,X)$ is returned from the first loop (\cref{line-return-easy-case}), in which case $z$ has one neighbor in $L_{\ell-1}$, the set $X$ has size $\Delta-1$ and $H[X]$ is not complete. As such, conditions a) and b) are verified. Moreover, $X$ contains only vertices from $L_\ell$ (the furthest layer from $v$). Proving that $H \setminus X$ is connected is similar to the case when $k\geq 3$: $L_1$ to $L_{\ell-1}$ are included in $v$'s connected component and every vertex of $L_\ell$ has a neighbor in $L_{\ell-1}$.
We now claim that not returning at \cref{line-return-easy-case} and entering \cref{line-case-k1} implies the following: for every vertex $z\in L_\ell$ having only one neighbor in $L_{\ell-1}$ the graph $H[N[z]\cap L_\ell]$ is isomorphic to $K_{\Delta}$. This would be immediate if we checked every vertex instead of every non-marked vertex, see \cref{line-non-marked-for}. So we have to show that no vertex marked \cref{line-marking} can pass the the test \cref{line-test-easy-loop}. Indeed, if some $z$ fails this test (\cref{line-test-easy-loop}) it means that $H[z]\cap L_\ell$ is a clique of order $\Delta$. No vertex in this clique has a non-edge in its neighborhood restricted to $L_\ell$, hence all vertices of this clique can safely be marked and not examined later.

We pick some vertex $u \in L_\ell$ with only one neighbor in $L_{\ell-1}$ and define $C = H[N[u] \cap L_\ell]$. Recall that $C$ is a clique of order $\Delta$. We select a vertex $x_1 \in L_{\ell-1} \cap N(C)$ which is connected to at most half the vertices of $C$ (see Figure~\ref{fig:k1}). This is possible because $C$ has at least two neighbors in $L_{\ell-1}$ for otherwise $C\cup N(C)$ would be a clique of order $\Delta+1$. As $C$ has order $\Delta \ge 3$, $x_1$ has at least 
two non-neighbors in $C$ that we denote $x_2$ and $x_3$.
At \cref{line-build-X-k1} we define $X$ as $N(z) \setminus x_3$.
Let us prove that this set, which we return at \cref{line-return-k1}, has the desired properties. Property a) is true by construction. Property b) also holds because $x_2$ was chosen among the non-neighbors of $x_1$ so $H[X]$ contains a non-edge. Finally, since $X \subseteq L_{\ell-1} \cup L_\ell$, it follows that $v\notin X$. 
It remains to show that $H\setminus X$ is connected.

As for the case $k\geq 3$, the connected component of $H\setminus X$ containing vertex $v$ contains all the vertices of 
$L_1,\cdots, L_{\ell-1}\setminus X = L_{\ell-1}\setminus\{x_1\}$. Besides, since $x_3$ is not connected to $x_1$ but must have a neighbor in $L_{\ell-1}$, it is connected to $v$. Therefore $z$ is connected to $v$ through $x_3$. It remains to show that the vertices of $L_\ell \setminus (X\cup \{z, x_3\}) = L_\ell \setminus V(C)$ belong to $v$'s connected component. Let $w$ be a vertex in this set.
Since $H$ is 2-connected, there exist two vertex-disjoint paths $P_1, P_2$ from $w$ to $v$. We can assume then that for instance $P_2$ does not contain $x_1$. Since $C$ is a connected component of $H[L_\ell]$, $P_2$ reaches $L_{\ell-1}\setminus \{x_1\}$ before possibly reaching $C$. So this part of $P_2$ avoids $X$ and ensures $w$ to be in $v$'s connected component. This completes the case.

\begin{figure}
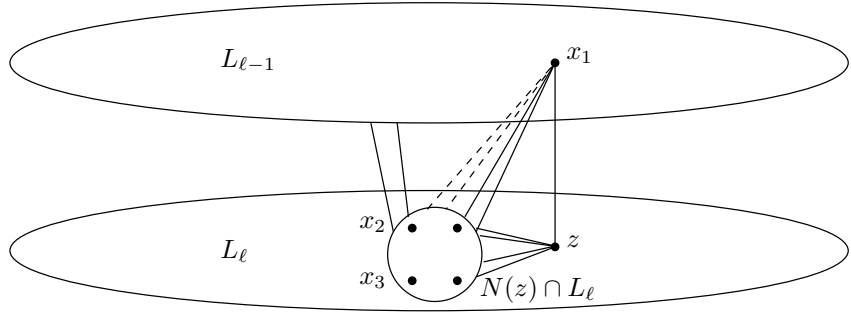

  \centering
  \ifx\XFigwidth\undefined\dimen1=0pt\else\dimen1\XFigwidth\fi
\divide\dimen1 by 10064
\ifx\XFigheight\undefined\dimen3=0pt\else\dimen3\XFigheight\fi
\divide\dimen3 by 3718
\ifdim\dimen1=0pt\ifdim\dimen3=0pt\dimen1=2071sp\dimen3\dimen1
  \else\dimen1\dimen3\fi\else\ifdim\dimen3=0pt\dimen3\dimen1\fi\fi
\tikzpicture[x=+\dimen1, y=+\dimen3]
{\ifx\XFigu\undefined\catcode`\@11
\def\temp{\alloc@1\dimen\dimendef\insc@unt}\temp\XFigu\catcode`\@12\fi}
\XFigu2071sp
% Uncomment to scale line thicknesses with the same
% factor as width of the drawing.
%\pgfextractx\XFigu{\pgfqpointxy{1}{1}}
\ifdim\XFigu<0pt\XFigu-\XFigu\fi
\clip(2235,-7529) rectangle (12299,-3811);
\tikzset{inner sep=+0pt, outer sep=+0pt}
\pgfsetlinewidth{+15\XFigu}
\draw  (7267,-6795) ellipse [x radius=+5017,y radius=+720];
\draw  (7267,-4545) ellipse [x radius=+5017,y radius=+720];
\filldraw  (7065,-6525) circle [radius=+45];
\filldraw  (8775,-6750) circle [radius=+45];
\filldraw  (8775,-4545) circle [radius=+45];
\draw  (7335,-6840) circle [radius=+564];
\filldraw  (7065,-7155) circle [radius=+45];
\filldraw  (7605,-7155) circle [radius=+45];
\filldraw  (7605,-6525) circle [radius=+45];
\draw (8775,-4545)--(7830,-6570);
\draw (8775,-4545)--(7695,-6390);
\draw (7830,-6525)--(8820,-6750);
\draw (7875,-6615)--(8820,-6750);
\draw (8730,-6750)--(7920,-6930);
\draw (8775,-6750)--(7830,-7110);
\pgfsetdash{{+90\XFigu}{+90\XFigu}}{++0pt}
\draw (7245,-6300)--(8775,-4545);
\draw (8775,-4545)--(7470,-6300);
\pgfsetdash{}{+0pt}
\draw (8775,-4545)--(8775,-6750);
\draw (7020,-6390)--(6885,-5265);
\draw (6840,-6570)--(6570,-5265);
\pgftext[base,left,at=\pgfqpointxy{6435}{-6525}] {\fontsize{10}{12}\usefont{T1}{ptm}{m}{n}$x_2$}
\pgftext[base,left,at=\pgfqpointxy{6435}{-7200}] {\fontsize{10}{12}\usefont{T1}{ptm}{m}{n}$x_3$}
\pgftext[base,left,at=\pgfqpointxy{7875}{-7335}] {\fontsize{10}{12}\usefont{T1}{ptm}{m}{n}$N(z)\cap L_\ell$}
\pgftext[base,left,at=\pgfqpointxy{8910}{-6750}] {\fontsize{10}{12}\usefont{T1}{ptm}{m}{n}$z$}
\pgftext[base,left,at=\pgfqpointxy{8910}{-4500}] {\fontsize{10}{12}\usefont{T1}{ptm}{m}{n}$x_1$}
\pgftext[base,left,at=\pgfqpointxy{4770}{-6885}] {\fontsize{10}{12}\usefont{T1}{ptm}{m}{n}$L_\ell$}
\pgftext[base,left,at=\pgfqpointxy{4770}{-4590}] {\fontsize{10}{12}\usefont{T1}{ptm}{m}{n}$L_{\ell-1}$}
\endtikzpicture%
    \caption{The case $k=1$ with $\Delta = 5$.}
    \label{fig:k1}
\end{figure}

\medskip

\underline{Case $k=2$}. As in the case $k=1$, $(z,X)$ can be returned by the first loop (\cref{line-return-easy-case}). In that case, conditions a) and b) and c) hold for the same reasons as in the case $k=2$.

We are now at \cref{line-case-k2} and consider a vertex $u\in L_\ell$ having only two neighbors in $L_{\ell-1}$ that we denote $x_1$ and $x_2$ . We claim that not returning at \cref{line-return-easy-case} and executing \cref{line-case-k2} implies the following: $H[N[u]\setminus \{x_1\}]$ and $H[N[u]\setminus \{x_2\}]$ are isomorphic to $K_{\Delta}$ (see Figure~\ref{fig:k2}). As $x_1$ and $x_2$ have $\Delta -1$ neighbors in $L_\ell$, both of them necessarily have their $\Delta^\text{th}$ neighbor in $L_{\ell-2}$. We denote them by $y_1$ and $y_2$. As $H$ is distinct from $K_*^-$, 
these vertices are distinct. Therefore, we can choose $x_1$ and $y_1$ be such that $y_1 \neq v$ (exchanging the $x_i$'s and $y_i$'s if necessary).

Consider the set $X = N(x_1) \setminus\{u\}$. We claim that $(x_1, X)$ is a special neighborhood, which will complete the proof. Clearly, $(x_1, X)$ satisfies a) and since $y_1\in L_{\ell-2}$ has no neighbors in $L_\ell$, it also satisfies b). Furthermore, since $v\neq y_1$ and $X\setminus \{y_1\} \subseteq L_\ell$, we have that $v\notin X$.
To see that c) holds, observe as before that the connected component of $H\setminus X$ containing $v$ 
contains all the vertices of $L_1,\cdots, L_{\ell-2}\setminus \{y_1\}$. Then, consider the connected components of $H[L_{\ell-1} \cup L_{\ell}]$. Given the maximum degree $\Delta$, one of them corresponds to $H[N[u]]$. As $N[u]\setminus X = \{u,x_1,x_2\}$, the path $x_1ux_2y_2$ ensures that the vertices of $N[u]\setminus X$ are in $v$'s connected component. Let any other connected component $Q$ of 
$H[L_{\ell-1} \cup L_{\ell}]$. By what precedes, $Q\setminus X =Q$. By the 2-connectivity of $H$, $N(Q)\setminus \{y_1\}$ is non-empty, which connects $Q$ to $v$.
This completes the case and therefore the proof of the lemma.

\begin{figure}
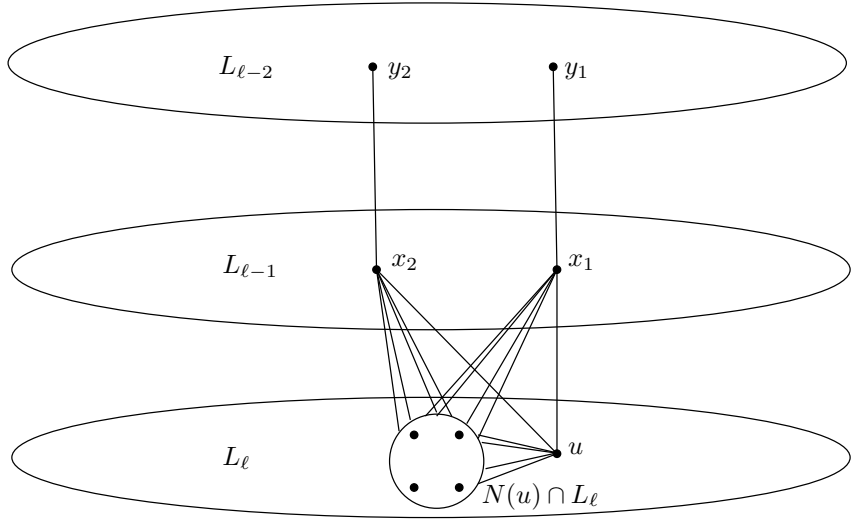

  \centering

\ifx\XFigwidth\undefined\dimen1=0pt\else\dimen1\XFigwidth\fi
\divide\dimen1 by 10109
\ifx\XFigheight\undefined\dimen3=0pt\else\dimen3\XFigheight\fi
\divide\dimen3 by 6193
\ifdim\dimen1=0pt\ifdim\dimen3=0pt\dimen1=2071sp\dimen3\dimen1
  \else\dimen1\dimen3\fi\else\ifdim\dimen3=0pt\dimen3\dimen1\fi\fi
\tikzpicture[x=+\dimen1, y=+\dimen3]
{\ifx\XFigu\undefined\catcode`\@11
\def\temp{\alloc@1\dimen\dimendef\insc@unt}\temp\XFigu\catcode`\@12\fi}
\XFigu2071sp
% Uncomment to scale line thicknesses with the same
% factor as width of the drawing.
%\pgfextractx\XFigu{\pgfqpointxy{1}{1}}
\ifdim\XFigu<0pt\XFigu-\XFigu\fi
\clip(2190,-7529) rectangle (12299,-1336);
\tikzset{inner sep=+0pt, outer sep=+0pt}
\pgfsetlinewidth{+15\XFigu}
\draw  (7267,-6795) ellipse [x radius=+5017,y radius=+720];
\draw  (7267,-4545) ellipse [x radius=+5017,y radius=+720];
\filldraw  (7065,-6525) circle [radius=+45];
\filldraw  (8775,-6750) circle [radius=+45];
\filldraw  (8775,-4545) circle [radius=+45];
\draw  (7335,-6840) circle [radius=+564];
\filldraw  (7065,-7155) circle [radius=+45];
\filldraw  (7605,-7155) circle [radius=+45];
\filldraw  (7605,-6525) circle [radius=+45];
\filldraw  (6615,-4545) circle [radius=+45];
\draw  (7222,-2070) ellipse [x radius=+5017,y radius=+720];
\filldraw  (6570,-2115) circle [radius=+45];
\filldraw  (8730,-2115) circle [radius=+45];
\draw (8775,-4545)--(7830,-6570);
\draw (8775,-4545)--(7695,-6390);
\draw (7830,-6525)--(8820,-6750);
\draw (7875,-6615)--(8820,-6750);
\draw (8730,-6750)--(7920,-6930);
\draw (8775,-6750)--(7830,-7110);
\pgfsetdash{}{+0pt}
\draw (8775,-4545)--(8775,-6750);
\draw (8775,-4545)--(7335,-6300);
\draw (8775,-4545)--(7200,-6300);
\draw (6615,-4545)--(7515,-6300);
\draw (6615,-4545)--(7335,-6255);
\draw (6615,-4545)--(7020,-6345);
\draw (6615,-4545)--(6885,-6480);
\draw (6615,-4545)--(8775,-6750);
\draw (6615,-4545)--(6570,-2115);
\draw (8775,-4545)--(8730,-2115);
\pgftext[base,left,at=\pgfqpointxy{7875}{-7335}] {\fontsize{10}{12}\usefont{T1}{ptm}{m}{n}$N(u)\cap L_\ell$}
\pgftext[base,left,at=\pgfqpointxy{8910}{-6750}] {\fontsize{10}{12}\usefont{T1}{ptm}{m}{n}$u$}
\pgftext[base,left,at=\pgfqpointxy{8910}{-4500}] {\fontsize{10}{12}\usefont{T1}{ptm}{m}{n}$x_1$}
\pgftext[base,left,at=\pgfqpointxy{4770}{-6885}] {\fontsize{10}{12}\usefont{T1}{ptm}{m}{n}$L_\ell$}
\pgftext[base,left,at=\pgfqpointxy{4770}{-4590}] {\fontsize{10}{12}\usefont{T1}{ptm}{m}{n}$L_{\ell-1}$}
\pgftext[base,left,at=\pgfqpointxy{6795}{-4500}] {\fontsize{10}{12}\usefont{T1}{ptm}{m}{n}$x_2$}
\pgftext[base,left,at=\pgfqpointxy{6750}{-2205}] {\fontsize{10}{12}\usefont{T1}{ptm}{m}{n}$y_2$}
\pgftext[base,left,at=\pgfqpointxy{8865}{-2205}] {\fontsize{10}{12}\usefont{T1}{ptm}{m}{n}$y_1$}
\pgftext[base,left,at=\pgfqpointxy{4725}{-2205}] {\fontsize{10}{12}\usefont{T1}{ptm}{m}{n}$L_{\ell-2}$}
\endtikzpicture%

    \caption{The case $k=2$ with $\Delta = 6$.}
    \label{fig:k2}
\end{figure}

\end{proof}

\begin{lemma}
   Algorithm~\ref{algo-z-X} runs in time $O(n+m)$.
\label{lemma-get-special-neighborhood-complexity}
\end{lemma}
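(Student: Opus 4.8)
The plan is to show that Algorithm~\ref{algo-z-X} performs only a constant number of ``global'' passes over $H$, each costing $O(n+m)$, plus some bookkeeping that can be charged to vertices and edges. First I would observe that the BFS and the layer partition $L_0,\ldots,L_\ell$ are computed in one traversal, and that the quantity $k$ at \cref{line-k-equals-min} is the minimum, over $z\in L_\ell$, of $|N(z)\cap L_{\ell-1}|$; since the layer of each vertex is known after the BFS, computing every $|N(z)\cap L_{\ell-1}|$ amounts to scanning each edge incident to a vertex of $L_\ell$ once, so this step is $O(n+m)$. The branch $k\ge 3$ returns immediately after inspecting the neighborhood of a single vertex $z$, which costs $O(\deg(z))=O(\Delta)$, hence $O(n+m)$ trivially.

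The main work, and the step I expect to be the crux, is bounding the cost of the \textbf{for} loop over non-marked vertices (\crefrange{line-non-marked-for}{line-end-for-non-marked}). I would argue by amortization. Each candidate $z\in L_\ell$ with exactly $k$ neighbors in $L_{\ell-1}$ is processed at most once, since once its neighborhood is examined it either triggers a return or gets marked. The test at \cref{line-test-easy-loop}, detecting a non-edge $x_1x_2$ in $N(z)$ with $|\{x_1,x_2\}\cap L_{\ell-1}|<k$, can be decided in time $O(\deg(z)+\sum_{w\in N(z)}\deg(w))$ by, for each $w\in N(z)$, marking its neighbors and checking whether some other vertex of $N(z)$ (of the appropriate layer) is missing; this is the delicate part, because a naive scan of all pairs in $N(z)$ would cost $\Theta(\Delta^2)$ per vertex and could sum to $\Theta(\Delta m)$ overall. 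I would instead test adjacency within $N(z)$ by the standard technique of marking $N(z)$ in a boolean array and, for each $w\in N(z)$, counting how many of its neighbors lie in $N(z)$; a vertex $w\in N(z)$ participates in a non-edge precisely when this count is below $|N(z)|-1$, and the layer condition is checked using the precomputed layers. The total cost of the loop is then $\sum_{z}\sum_{w\in N(z)}\deg(w)$, and here I would use that $\Delta$ is a constant factor only in the worst case; the cleaner bound is that each vertex $z$ examined is charged $O(\deg(z)+\sum_{w\in N(z)}\deg(w))$, but since each such $z$ is examined at most once and marking prevents re-examination, and since $\ell$ layers are fixed, the sum telescopes to $O(n+m)$ after absorbing the $\Delta=O(1)$-per-edge bound that follows from $|N(z)|\le\Delta$ and each edge being scanned a bounded number of times.

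For the two terminal branches ($k=1$ at \crefrange{line-case-k1}{line-return-k1} and $k=2$ at \crefrange{line-case-k2}{line-end-algo-special}), each builds a single special neighborhood without iterating: forming the clique $C=H[N[u]\cap L_\ell]$, finding $x_1\in L_{\ell-1}\cap N(C)$ with at most $|C|/2$ neighbors in $C$, and selecting $x_2,x_3$ all touch only the edges incident to $C\cup N(C)$, which is $O(n+m)$; similarly the $k=2$ branch inspects only $N[u]$ and the two vertices $y_1,y_2$ in $L_{\ell-2}$, a constant number of local neighborhoods. I would finish by adding up the costs: the BFS, the computation of $k$, the single $k\ge3$ inspection, the amortized $O(n+m)$ for-loop, and the $O(n+m)$ terminal branches, concluding that the whole algorithm runs in $O(n+m)$. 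The obstacle to watch is precisely the adjacency-within-$N(z)$ test, where I must be explicit that using a marking array (rather than repeated $O(\Delta)$ membership queries naively nested) keeps each examined vertex's cost linear in its incident edge count, so that the marking argument yields a genuine $O(n+m)$ bound rather than an $O(\Delta(n+m))$ one.
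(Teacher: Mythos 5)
Your overall decomposition of the algorithm into a constant number of passes matches the paper's, and most steps (BFS, computing $k$, the $k\geq 3$ branch, the two terminal branches) are handled correctly. However, there is a genuine gap in the one place you yourself identify as the crux: the amortized cost of the \textbf{for} loop at \crefrange{line-non-marked-for}{line-end-for-non-marked}. Your per-iteration cost for the adjacency test is $O(\deg(z)+\sum_{w\in N(z)}\deg(w))=O(\Delta^2)$, and your only bound on the number of iterations is that ``each such $z$ is examined at most once,'' i.e.\ at most $|L_\ell|$ iterations. That yields $O(\Delta^2|L_\ell|)=O(\Delta m)$, not $O(n+m)$; the phrase ``each edge being scanned a bounded number of times'' is precisely the claim that needs proof, and nothing in your argument supplies it, since a marked vertex can still appear as a \emph{neighbor} of many later candidates $z$. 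Your fallback ``$\Delta=O(1)$'' is not available: $\Delta$ is part of the input and the whole point of the lemma is a bound independent of it.

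The missing ingredient, which the paper uses, is that the sets $N(z)\cap L_\ell$ over all examined $z$ are \emph{pairwise disjoint}. This does not follow from marking alone; it follows from the structural consequence of a failed test (established in the correctness proof of Lemma~\ref{lemma-get-special-neighborhood-correction}): if $z$ fails the test at \cref{line-test-easy-loop}, then $H[N[z]\cap L_\ell]$ is a clique whose vertices have all $\Delta$ of their neighbors inside $N[z]\cup(N(z)\cap L_{\ell-1})$, hence no vertex of $N(z)\cap L_\ell$ is adjacent to any later candidate. With disjointness, each edge of $H[L_\ell]$ is charged $O(1)$ times across all iterations, and the additional cost of scanning the $k\le 2$ neighbors of each examined $z$ in $L_{\ell-1}$ sums to $O(\Delta|L_\ell|)=O(m)$ since every vertex of $L_\ell$ has degree $\Delta$; this gives the paper's bound $O(|E(H)|+2\Delta|L_\ell|)=O(m)$. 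You need to state and use this disjointness (or an equivalent charging scheme) for the amortization to go through.
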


\begin{proof}
  The first step of the algorithm, performing a BFS on $H$ and partitioning the vertices according to their distance from $v$, can be done in time $O(n+m)$. In order to speed up some later operations to achieve the desired complexity, we precompute an array $distToV$ such that for each $u\in H, distToV[u]$ contains the distance from $v$ to $u$. Then, computing $k$ can be done in time $O(n+m)$ by looping over each $z\in L_\ell$, each time visiting the neighbors of $z$ and checking with the $distToV$ array whether they belong to $L_{\ell-1}$. We also initialize here a boolean array $isNeighbWithZ$ to false for each vertex of $H$.
  
The case $k \geq 3$ takes time $O(n+m)$. Indeed, in time $O(\Delta)$ we first update $isNeighbWithZ$ to assign true for each neighbor of $z$. Thanks to this, the selection of $x_1,x_2$ can be done in $O(|E(H(N[z]))|) = O(\Delta+m)$.

The for loop at \crefrange{line-non-marked-for}{line-end-for-non-marked} can be done in time $O(n+m)$. First, the list of vertices in $L_\ell$ with $k$ neighbors in $L_{\ell-1}$ can be made in time $O(n+m)$ thanks to $distToV$.
Second, note that the sets $N(z)\cap L_\ell$ for each $z$ we consider at \cref{line-non-marked-for} are disjoint. This comes from the fact that we mark all vertices in $N(z)\cap L_\ell$ for each $z$ we investigate.

We now only need to observe that \crefrange{line-select-easy-loop}{line-return-easy-case}  (executed at most once) take time $O(n+m)$ and that \cref{line-test-easy-loop} takes time
$O(|E(H[N(z)]|)$ per iteration. Over all iterations on unmarked vertices,
this sums up to $O(|E(H)| + 2\Delta |L_\ell|) = O(m)$ as the sets $N(z)\cap L_\ell$ are disjoint. The $2\Delta$ term comes from the fact that at $k\leq 2$, hence $H[N(z)]$ is made of $H[N(z)\cap L_\ell]$ plus at most two other vertices.
The marking of the vertices takes time $O(m)$ in total.

The body of the ``if $k==1$'' part takes time $O(n+m)$. The only non-trivial point is the selection of $x_1$. To achieve this complexity, we can first build and fill a boolean array $isInC$. We can select $x_1$ by first generating the list of neighbors of $C$ which are in $L_{\ell-1}$. Then for each such vertex we count how many of its neighbors are in $C$. This consists in visiting disjoint edges, hence it can be done in time $O(n+m)$.
  
Finally, it is clear that the body of the ``else'' part ($k=2$) can also be done in time $O(n+m)$.
\end{proof}

\section{The proof of Theorem~\ref{thm:main-bis} (general case)}
\label{section-proof-general-case}
Consider Algorithm~\ref{algo-main}, and let us verify that it fulfills Theorem~\ref{thm:main-bis}.

\begin{algorithm}

\textbf{Input}: A graph $G$ with maximum degree $\Delta$, and a sequence of length $s\ge 2$ of non-negative integers $p_1, \cdots, p_s$ such that $\sum_{i=1}^s p_i \geq \Delta - s$\\\textbf{Output}: A partition of $V(G)$ into sets $A_1, \cdots, A_s$ so that each $G[A_i]$ is $p_i$-degenerate

\begin{algorithmic}[1]
\State $s' \gets \min(s,\Delta)$
\If{$\sum_{i=1}^{s'} p_i > \Delta - s'$}
    \State $v_1,\cdots,v_{n} \gets$ Any ordering of $V(G)$
  	\State \Return \texttt{greedy\_partitioning}$(G,v_1,\cdots,v_n,p_1,\cdots,p_{s'})$
\EndIf $\ \ \ \ \ \ \ \ $// Now $\sum_{i=1}^{s'} p_i = \Delta - s'$
\If{$G$ is not regular}
    \State \Return \texttt{non-$\Delta$-regular\_partitioning}$(G,s,p_1,\cdots,p_{s'})$
\EndIf $\ \ \ \ \ \ \ \ $// Now $G$ is $\Delta$-regular \label{linefirsttwocasefinal}
\State $p^- \gets \min(p_1,p_2)$
\State $p^+ \gets \max(p_1,p_2)$
\State $A,B \gets$ \texttt{$\Delta$-regular\_bipartitioning}$(G,p^-,\Delta-2-p^-)$ \label{call-regular-final}
\State $v_1,\cdots,v_{|B|} \gets$ a $(\Delta-2-p^-)$-degenerate ordering of $G[B]$
\State $A^+,A_3,\cdots,A_{s'} \gets$ \texttt{greedy\_partitioning}$(G[B],v_1\cdots,v_{|B|},p^+,p_3,\cdots,p_{s'})$
\If{$p_1\le p_2$}
    \State \Return $A,A^+,A_3,\cdots,A_{s'}$
\Else
    \State \Return $A^+,A,A_3,\cdots,A_{s'}$
\EndIf    
\end{algorithmic}

\caption{\texttt{main\_algorithm}}~\label{algo-main}
\end{algorithm}

\medskip

The trick we use here, to have a complexity independent of $s$, the number of $p_i$'s in input, is to restrict to the first $\Delta$ of the $p_i$'s (Algorithm~\ref{algo-main} might not read the whole input), and output only $\min(s,\Delta)$ sets.

\begin{proof}[Proof of \Cref{thm:main-bis}]
We first prove the correctness and then analyze the runtime. 

\medskip
\underline{Correctness of the algorithm.}
If $s$ is greater than $\Delta$ we can ignore all the $p_i$ for $i > \Delta$. Indeed, in that case the $\Delta$ first $p_i$'s, even if they are all equal to zero, sum up to at least $0\ge \Delta - s$. Thus, the correctness of the algorithm for the case $s\le \Delta$, implies its correctness in full. As in the algorithm, let us consider
the following three cases: Either $\sum_{i=1}^{s'}{p_i} > \Delta-s'$, $\sum_{i=1}^{s'}{p_i} = \Delta-s'$ and $G$ is not $\Delta$-regular, or $\sum_{i=1}^{s'}{p_i} = \Delta-s'$ 
and $G$ is $\Delta$-regular.
The first two cases are handled by the calls to 
\texttt{greedy\_partitioning} (by \Cref{lem:greedy}), and to
\texttt{non-$\Delta$-regular\_partitioning} (by \Cref{lemma-non-regular}), respectively.
For the third case, we only prove the case $p_1\le p_2$, the case $p_1> p_2$ being similar.
This third case is handled in two steps, first by a call to \texttt{$\Delta$-regular\_bipartitioning}
partitioning $V(G)$ into a $p_1$-degenerate graph, $G[A]$, and a $(\Delta - 2 - p_1)$-degenerate graph, $G[B]$ 
(by \Cref{thm:main-bipartition} since $p_1\le p_2 \le \Delta -2-p_1$), and then by a call to 
\texttt{greedy\_partitioning} refining $B$ into $p_2$-, $\ldots, p_{s'}$-degenerate graphs 
(by \Cref{rmk:greedy-degenerate} since $\Delta -2-p_1 < (s'-1) + \sum_{i=2}^{s'} p_i$).

\medskip
\underline{Runtime analysis.}
The time complexity of \Cref{algo-main} lies in the calls to other algorithms and in the instructions
within the algorithm, the latter taking clearly only time $O(n+m+s')$. 
By \Cref{lem:greedy}, \Cref{lemma-non-regular}, and \Cref{thm:main-bipartition},
these calls take time $O(n+m+s')$ and $O(n+m)$. As $s'\le \Delta \le n$, the overall complexity is $O(n+m)$.
\end{proof}

\section{Final remarks}\label{sec:remarks}

\subsection{Bounding the maximum degree}\label{sec:degree}

In~\cite{borodin2000variable} the authors describe an algorithm turning a partition fulfilling point (i) of Theorem~\ref{borodin} into one fulfilling both (i) and (ii). Their algorithm is a succession of individual vertex moves, that is, replacing two sets $V_i,V_j$ 
with $V_i\setminus \{u\}$ and $V_j\cup\{u\}$. Each such step
diminish an energy-like function whose image lies in $[-4m,2m]$, so we can safely bound the number of moves by $6m$. At each step, updating the partition and maintaining the list of vertices violating point (ii) needs $O(deg(u))= O(\Delta)$ time. So the whole algorithm runs in $O(\Delta m)$ time. 

\subsection{Perspectives}
Theorem~\ref{borodin} has been generalized in~\cite{borodin2000variable} by replacing the notion of degeneracy by the notion of \emph{variable degeneracy}. This improvement was in turn recently generalized in the context of digraphs~\cite{bang2020digraphs_variable}.  
This generalization is achievable in polynomial time (quadratic or less), but it seems difficult to perform it in linear time, as it relies on finding cycles with particular properties. It would be interesting to have an algorithm performing such a partition in linear time.

\bigskip

\noindent
\textbf{Acknowledgements.} Supported by Agence Nationale de la Recherche (France) under
research grant ANR DIGRAPHS ANR-19-CE48-0013-01, and ANR GATO ANR-16-CE40-0009. The authors are grateful to the organizers of JGA 2021 for making this collaboration possible. We also thank the referees for their feedback which prompted improvements in the complexity of our algorithm (by removing a multiplicative factor $s$). They also helped us write this paper in a clearer way.

\small

\end{document}